\documentclass{article}
\usepackage{latexsym, a4wide}
\usepackage{amsmath, rotating, color, pdfsync}
\usepackage{amsfonts,amssymb, amsthm, mathrsfs}
\usepackage{bbm}
\usepackage{wrapfig}
\usepackage{amsmath}
\usepackage{natbib}
\usepackage[intlimits]{empheq}
\usepackage{multicol}
\usepackage{amsthm}
\usepackage[hidelinks]{hyperref}
\usepackage{cancel}

\usepackage{xspace}

\newcommand{\species}[3]{%
	\newcommand{#1}{\gdef#1{\textit{#3}\xspace}\textit{#2}\xspace}}

\species{\ecoli}{Escherichia coli}{E.~coli}

\newcommand\unnumberedfootnote[1]{ %
        \let\temp=\thefootnote %
        \renewcommand{\thefootnote}{}%
        \footnote{#1}%
        \let\thefootnote=\temp%
        \addtocounter{footnote}{-1}}

\newtheorem{theorem}{Theorem}
\newtheorem{proposition}{Proposition}[section]
\newtheorem{lemma}[proposition]{Lemma}
\newtheorem{corollary}[proposition]{Corollary}
\newtheorem{definition}[proposition]{Definition}
\theoremstyle{definition}
\newtheorem{remark}[proposition]{Remark}

\usepackage{yfonts, eufrak}
\pagestyle{headings}
\usepackage[normalem]{ulem}
\numberwithin{equation}{section}

\DeclareMathAlphabet{\mathpzc}{OT1}{pzc}{m}{it}

\begin{document}
\title{\LARGE Modifiers of mutation rate in selectively fluctuating
  environments}

\author{\sc Franz Baumdicker, Elisabeth Huss, Peter Pfaffelhuber \\[2ex] University of Freiburg\\[2ex]}

\date{\today\unnumberedfootnote{\emph{AMS 2010 subject
      classification.} {\tt 92D15} (Primary) {\tt 60F17} (Secondary).}
  \unnumberedfootnote{\emph{Keywords and phrases.}  Fleming-Viot
    process, fluctuationg selection, modifier theory, microbial evolution, second order evolution, fixation probability} }

\maketitle

\begin{abstract}
  We study a mutation-selection model with a fluctuating
  environment. More precisely, individuals in a large population are
  assumed to have a modifier locus determining the mutation rate
  $u \in [0,\vartheta]$ at a second locus with types $v\in [0,1]$. In
  addition, the environment fluctuates, meaning that individual types
  change their fitness at some high rate. Fitness only depends on the
  type of the second locus. We obtain general limit results for
  the evolution of the allele frequency distribution for rapidly
  fluctuating environments.
  As an application, we make use of the resulting Fleming-Viot process and compute
  the fixation probabilities for higher mutation rates
  in the special case of two bi-allelic loci in the limit of small fitness differences at the second locus.
\end{abstract}

\section{Introduction}
Mutation is inarguably one of the fundamental forces behind evolution.
Mutations are DNA copying errors that result in the creation of new
alleles and thus drive genetic diversity within the population.  The
study of the rates at which mutations occur is therefore of great
interest.
It has been noticed early on that most mutations are deleterious
(e.g.~\citealp{Fisher1930}).  Mutating too often would hence most
likely cause an individual to be at a disadvantage relative to ones
that rarely mutate.  This seems fitting in an environment where no
change happens that could impact the fitness of these individuals.
Individuals that hardly ever mutate and are well-adapted to this fixed
environment are favored by selection, while those that produce too
many mutations which mostly do not bring about any improvements fail
to establish themselves within that population.  The result is then a
population with a relatively low mutation rate.  In population genetic
models a common assumption is thus a constant and often quite low
mutation rate.

However, this picture changes if the population is forced to adapt to
a moving fitness optimum. As an example, host-parasite interactions
can result in an evolutionary arms race both in eukaryotes (see e.g.\
\citealp{Davies1989}) and in prokaryotes (see e.g.\ \citealp{Koonin2017} and \citealp{Pal2007}).
More generally, if external influences change the environment in a way
that well-adapted individuals that had previously enjoyed the
preferential treatment by selection are faced with a decrease in
fitness, higher mutation rates might be beneficial.  Indeed, an
increase in the number of individuals with high mutation rates, often
called \emph{mutators}, has been observed in many experiments, where
bacterial populations are exposed to new environments forcing them to
quickly create better-adapted individuals~\citep{Denamur2006}.  One of
the earliest works that deals with this subject is
\cite{Sturtevant1937}.  This essay discusses the fact that mutation
rates can differ even within taxa and that genes affecting the
mutation rate succumb to selection.  As soon as adaptation is reached
and the environment does not change again,
there are no benefits in having higher mutation rates and selection
will again favor lower mutation rates \citep{Wielgoss2013a}. The study
of the rise and fall in frequency of these mutators and their role in
adaptive evolution has been gaining more and more attention over the
years especially for microbial evolution, see
e.g.~\cite{Tenaillon2001} for a review.

We study the evolution of mutation rates using modifier theory, where
an additional neutral modifier locus determines the mutation rate at a
second locus.  Modifier theory has been used to show that mutation
rates are reduced by indirect selection in constant
environments~\citep{Karlin1974,Liberman1986} and can be decreased or
increased in a random environment depending on the model parameters,
including the mean fitness differences between genotypes and the
variance and autocorrelation of the environment~\citep{Gillespie1981}.
Results on the evolution of modifier loci under fluctuating selection
strongly depend on the choice of model parameters.  Important
parameters are the speed and shape at which environmental changes are
triggered and the direction and strength of selection.  Mutators
increase in frequency hitchhiking beneficial
mutations~\citep{Johnson1999} and are indirectly selected against as
deleterious mutations accumulate faster in strains with higher
mutation rates~\citep{Dawson1998}.  Constant~\citep{Kessler1998},
moving~\citep{Tanaka2003} and periodically
fluctuating~\citep{Ishii1989,Travis2002} fitness landscapes have been
used to study the evolution of mutation rates and mutator frequencies
within bacterial populations.  At which speed and whether the
environment switches periodically or randomly affects not only
mutation~\citep{Ishii1989}, but also recombination and
migration~\citep{Carja2014} as well as phenotypic switching
rates~\citep{Hufton2016}.

In our work, we use the versatile framework of Fleming-Viot processes
to present a model describing the phenomenon of mutation modifiers
properly in rapidly fluctuating environments. We stick to a
prokaryotic (haploid) population evolving under mutation and
selection, but without recombination. In Section~\ref{sec_2}, we will
first derive a bivariate process that describes the mutation rate and
type space in the first variable and the fitness of the type in the
second variable which will act according to a fluctuating environment.
The process is defined as a solution to a well-posed martingale
problem and is called the \emph{Fleming-Viot process with mutation
  modifier and fluctuating selection}.  Our first result
(Theorem~\ref{T1}) is the convergence of this process to a unique
limit in the case of a fast fluctuating environment.  To show how the
results can be applied we continue in Section~\ref{sec_3} with a
special 2-type case where only two mutation rates and two types at the
second locus exist (Theorem~\ref{T2}).  We compute the fixation
probability of the high mutating type depending on the two mutation
rates in Theorem~\ref{T3}.

\begin{remark}[Notation]\label{rem:notation}
  We set $I:=[0,1]$. For a complete and separable metric space
  $(E,r)$, we denote by $\mathcal M(E)$ the space of measurable, by
  $\mathcal B(E)$ the space of bounded, measurable, by
  $\mathcal C_b(E)$ the space of bounded, continuous real-valued
  functions on $E$ (equipped with convergence of uniform on compacta)
  and for $L>0$ by $\mathcal C_L(E)$ the space of bounded, real-valued
  functions with Lipschitz constant $L$.
  For $\nu \in \mathcal P(E)$ -- the space of probability measures on
  $E$, equipped with the topology of weak convergence -- and
  $f \in \mathcal M(E)$, we write
  $\langle \nu, f\rangle := \int f(u)\nu(du)$, if the right hand side
  exists. We denote weak convergence by $\Rightarrow$. Note that this
  convergence relies on a topology on the underlying space.  More
  specifically, we rely on Skorohod convergence in path space.

  Below, we will be dealing with strongly continuous contraction
  semigroups. Recall that for some Markov process
  $X = (X_t)_{t\geq 0}$ with (locally compact and separable) state
  space $E$, the family of operators $(S_t)_{t\geq 0}$ given through
  $S_tf(x) := \mathbb E_x[f(X_t)]$ for $f\in\mathcal C_b(E)$ generates
  a semi-group (i.e.\ $S_t S_s = S_{t+s}$ by the Chapman-Kolmogorov
  equalities). It is a contraction since
  $||S_tf|| = \sup \mathbb E_x[f(X_t)] \leq ||f||$ and positive since
  $S_tf(x) = \mathbb E_x[f(X_t)]\geq 0$ for $f\geq 0$. In addition,
  such a semigroup has a conservative generator since $S_t1 = 1$ and
  is called strongly continuous if
  $S_tf(x) = \mathbb E_x[f(X_t)] \xrightarrow{t\to 0} f(x)$.  Also,
  recall that a positive, strongly continuous contraction semigroup
  with conservative generator and $S_tf \in \mathcal C(E)$ for all
  $t\geq 0, f\in\mathcal C(E)$ is called a Feller semigroup
  \citep{Kallenberg2002}.  Reversely, if $E$ is locally compact and
  separable, a Feller semigroup corresponds to a strong Markov process
  with sample paths in $\mathcal D_E([0,\infty))$; see
  \cite{EthierKurtz86}, Section~4.3. Such processes are therefore also
  called Feller processes.
\end{remark}

\section[A Fleming-Viot system with mutation modifier]{A Fleming-Viot
  system with mutation modifier and fast fluctuating selection}
  \label{sec_2}
Let us give some interpretation before we formally define the
generator of the sequence of Markov processes which we consider. We
will derive a Markov process $(X,Z)$ (more precisely we derive a
sequence of such processes and a limiting process) with state space
$S:=\mathcal P([0,\vartheta] \times I) \times \mathcal C_L(I)$ for
some $L>0$. For a sample $(u,v)$ from
$X_t \in \mathcal P([0,\vartheta] \times I)$ at time $t$, the first
coordinate, $u$, denotes the allele at the first locus (which we call
$A$-locus), whereas $v$ is the allele at the second
(\text{$B$-})locus. Here, $u \in [0,\vartheta]$ equals the mutation
rate of the sampled individual at the $B$-locus. Upon a mutation, the
allele at the $B$-locus is drawn from $\beta(v,.)$ (a transition
kernel on $I$). Selection acts on the $B$-locus according to some
fitness function $Z_t \in \mathcal C_L(I)$ at time $t$, which is
subject to fluctuations. The fitness function $Z_t$ changes along a
Poisson process to independent draws from
$\nu \in \mathcal P(\mathcal C_L(I))$.  We require that
$\mathbb E_\nu[Z(v)]=0$ for all $v\in I$, i.e.\ on average, no allele
at the $B$-locus has a fitness advantage.

~~

\noindent
We collect all assumptions and some notation in the following remark.

\begin{remark}[Assumption, state space and notation]\mbox{}
  \begin{enumerate}
  \item Let
    \begin{align*}
      \vartheta \geq 0, & \qquad \text{(maximal mutation rate at $B$-locus)},
      \\ \sigma\geq 0, &  \qquad  \text{(selection intensity)},
      \\ L \geq 0, &  \qquad  \text{(Lipshitz constant for fitness function)},
      \\ \gamma>0, &  \qquad \text{(rate of environmental change)},
      \\ \nu \in \mathcal P(\mathcal C_L(I)), & \qquad \text{(distribution of random fitness)},
    \end{align*}
    and $\beta$ a transition kernel from $I$ to $I$ (mutation kernel
    at the $B$-locus), such that $u\mapsto \beta(u,.)$ is
    continuous. Throughout, we assume that
    \begin{align}\label{eq:Z0}
      \mathbb E_\nu[Z(v)]=0, \qquad v\in I.
    \end{align}
  \item The state space of the Markov process in the next definition
    will be
    $S:=\mathcal P([0,\vartheta] \times I) \times \mathcal
    C_L(I)$. This space is equipped with the product topology, where
    $\mathcal C_L(I)$ is equipped with the topology of uniform
    convergence, and $\mathcal P([0,\vartheta] \times I)$ is equipped
    with the topology of weak convergence. Note that $S$ is locally
    compact.
  \item For $(u,v) \in [0,\vartheta] \times I$, we say that $u$ is
    the allele at the $A$-locus and $v$ is the allele at the
    $B$-locus. Denote by
    $\pi_A: [0,\vartheta] \times I \to [0, \vartheta]$ and
    $\pi_B: [0,\vartheta] \times I \to I$ the projections on
    the first and second coordinate, i.e.\ the $A$- and $B$-locus,
    respectively. More generally, for $k=1,...,n$, $\pi_{k,A}$
    ($\pi_{k,B}$) is the projection of $([0,\vartheta] \times I)^n$ to
    the $k$th entry at the $A$-locus ($B$-locus).
  \item For a transition kernel $\beta$ from $I$ to $I$ and
    $\phi \in \mathcal C(([0,\vartheta] \times I)^n)$, we set, for
    $u\in [0,\vartheta]^n$,
    \begin{align}\notag \beta_{k,B}\phi(u, v_1,...,v_n) := \int
      \beta(v_k, dv') \phi(u, v_1,...,v_{k-1}, v', v_{k+1},...,v_n)
    \end{align}
  \item For $z \in \mathcal C_L(I)$ and $v \in I^n$, we set
    $z_k(v) := z(v_k)$.
  \end{enumerate}
\end{remark}

\noindent
We briefly recall the notion of a martingale problem.

\begin{remark}[Martingale Problem]
  For some complete and separable metric space $(E,r)$, some linear
  $G: \mathcal D(G) \subseteq \mathcal C_b(E) \to \mathcal C_b(E)$ and
  $\mu \in \mathcal P(E)$, we say that an $E$-valued process $X$
  solves the $(G, \mathcal D(G), \mu)$ martingale problem if
  $X_0 \sim \mu$ and
  $$ \Big( \Phi(X_t) - \int_0^t G\Phi(X_s) ds\Big)_{t\geq 0}$$
  is a martingale for every $\Phi \in \mathcal D(X)$. We say that the
  $(G, \mathcal D(G), \mu)$ martingale problem is well-posed if there
  is a unique (in law) process $X$ which solves this martingale
  problem.
\end{remark}

\noindent
We give the martingale problem for the process $(X^N, Z^N)$ for some
$N=1,2,...$

\begin{definition}[Martingale problem for the Fleming-Viot process
  with mutation modifier and fluctuating selection\label{def:mp}]
  For $(u,v) \in ([0,\vartheta] \times I)^n$ with
  $u=(u_1,....,u_n), v=(v_1,...,v_n)$ and $1\leq k,l\leq n$, we set
  \begin{align*}
    \theta_{kl}(u) := (u_1,...,u_{l-1}, u_k, u_{l},...,u_{n-1}), \qquad \theta_{kl}(u,v) := (\theta_{kl}(u), \theta_{kl}(v)).
  \end{align*}
  For the domain of the generator of $(X^N, Z^N)$, we define the set
  of functions
  \begin{align*}
    \Pi := \{(x,z)\mapsto \Phi(x) \Psi(z):
    \Phi(x) & = \Phi^{n,\phi}(x) = \langle x^n, \phi\rangle, \Psi(z) = \Psi^{m,u}(z) = z(u_1)\cdots z(u_m),
    \\ & m,n=1,2,...,
         \phi \in \mathcal C(([0,\vartheta] \times I)^n), u=(u_1,...,u_m)\in I^m\}.
  \end{align*}
  The generator then reads
  \begin{align*}
    G_N & = G^{\text{res}} + G^{\text{mut}} + N\cdot G^{\text{sel}} + N^2 \cdot G^{\text{env}}
          \intertext{with}
          G^{\text{res}} \Phi(x)\Psi(z) & = \Psi(z)\cdot \sum_{k,l=1}^n \langle x^n, \phi \circ \theta_{kl} - \phi \rangle,\\
    G^{\text{mut}} \Phi(x)\Psi(z) & = \Psi(z)\cdot \sum_{k=1}^n \langle x^n, \pi_{k,A}\cdot
                                    (\beta_{k,B} \phi - \phi) \rangle,\\
    G^{\text{sel}} \Phi(x)\Psi(z) & = \Psi(z)\cdot \sigma \sum_{k=1}^n \langle x^{n+1}, \phi \cdot ( z_{k} - z_{n+1})\rangle,\\
    G^{\text{env}} \Phi(x)\Psi(z) & = \Phi(x)\cdot \gamma \cdot (\mathbb E_\nu[\Psi(Z)] - \Psi(z)).
  \end{align*}
  Then, for
  $S:=\mathcal P([0,\vartheta] \times I) \times \mathcal C(I)$ and
  $\mu \in \mathcal P(S)$, we call every $S$-valued process
  $(X^N, Z^N)$ such that $(X^N(0), Z^N(0))\sim \mu$ and
  $$ \Big(\Phi(X^N_t)\Psi(Z^N_t) - \int_0^t G_N\Phi(X^N_s)\Psi(Z^N_s)\Big)_{t\geq 0}$$
  is a martingale, the {\em Fleming-Viot process with mutation
    modifier and fluctuating selection}. Its martingale problem is
  called the $(G_N, \Pi, \mu)$-martingale problem.
\end{definition}

\begin{remark}[Interpretation of generator terms]
  Note that the terms $G^{\text{res}}$ and $G^{\text{sel}}$ appear
  frequently when studying Fleming-Viot systems; see e.g.  Chapter~3
  of \cite{EthierKurtz1993}. For the mutation operator, we note that
  \begin{align*}
    (\pi_{k,A}\cdot \beta_{k,B} \phi)(u,v) = u_k \cdot \int \beta(v_k, dv') \phi(u, v_1,...,v_{k-1}, v', v_{k+1},...,v_n).
  \end{align*}
  Hence, the state at the $A$-locus, $u_k$, equals the mutation rate
  at the $B$-locus.
\end{remark}

\begin{lemma}\label{l:uniN}
  For $N=1,2,...$ and $\mu \in \mathcal P(S)$, the
  $(G_N, \Pi, \mu)$-martingale problem is well-posed. This solution
  $(X^N, Z^N)$ is strongly continuous, i.e.\
  $(X^N_t, Z_t^N) \xRightarrow{t\to 0} (X^N_0, Z_0^N)$ and has the
  Feller property, i.e.\ $x\mapsto \mathbb E_x[f(X^N_t, Z_t^N)]$ is
  continuous for every $f\in\mathcal C(S)$.
\end{lemma}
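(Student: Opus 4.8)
The plan is to exploit the autonomy of the environment component and to reduce everything to classical Fleming-Viot theory run conditionally on the fitness path. The first observation is that $Z^N$ evolves on its own: restricting the test functions to $\Phi\equiv 1$ (that is, $n=0$) makes the sums defining $G^{\text{res}},G^{\text{mut}},G^{\text{sel}}$ empty, so $Z^N$ solves the martingale problem for $B\Psi=\gamma N^2(\mathbb E_\nu[\Psi(Z)]-\Psi)$. Since $\|B\Psi\|\le 2\gamma N^2\|\Psi\|$, this operator is bounded, its martingale problem is well posed, and $Z^N$ is simply the pure-jump process that holds its value for an $\mathrm{Exp}(\gamma N^2)$ time and then jumps to an independent $\nu$-draw. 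In particular $Z^N$ never leaves $\mathcal C_L(I)$, is Feller, and makes only finitely many jumps on any finite interval. I would note here that the monomials $\Psi^{m,u}$ must be $\nu$-integrable for $B$ to map $\Pi$ into bounded functions, which holds as soon as $\nu$ has finite polynomial moments, assumed throughout.

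Conditionally on a realisation $(z_t)_{t\ge 0}$ of the environment, the remaining operator $G^{\text{res}}+G^{\text{mut}}+N\,G^{\text{sel}}$ with frozen $z_t$ is the generator of a Fleming-Viot process on $\mathcal P([0,\vartheta]\times I)$ with resampling, with $B$-locus mutation along $\beta$ at the bounded, state-carried rate $u\in[0,\vartheta]$, and with selection driven by the current fitness $z_t$. Because each $z_t\in\mathcal C_L(I)$ is bounded on the compact set $I$, the selection coefficients $z_k-z_{n+1}$ are bounded on every environment-constant interval, so on such an interval we face a standard Fleming-Viot process with bounded mutation and selection, which is well posed by the function-valued moment duality of \cite{EthierKurtz1993}. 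For existence of the joint process I would then concatenate: on each interval $[\tau_i,\tau_{i+1})$ between consecutive environment jumps, run the classical Fleming-Viot process with frozen fitness $z_{\tau_i}$ from the current state, and at $\tau_{i+1}$ resample the environment from $\nu$. As there are a.s.\ finitely many jumps before any time $T$, this produces a c\`adl\`ag $S$-valued process, and splitting the Dynkin martingale at the jump times shows that it solves the $(G_N,\Pi,\mu)$-martingale problem.

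The heart of the matter is uniqueness of the joint law, which I would establish through the moment dual. Writing the duality function as $\langle x^n,\phi\rangle\,\prod_{i=1}^m z(u_i)$ times the selective $z$-factors created so far, the dual is a branching-coalescing, function-valued process in which pairs of sampling coordinates coalesce via $\theta_{kl}$ (from $G^{\text{res}}$), each lineage $k$ mutates by $\beta_{k,B}$ at rate $u_k$ (from $G^{\text{mut}}$), each lineage branches, $n\mapsto n+1$, at rate $N\sigma$ while producing the factor $(z_k-z_{n+1})$ (from $N\,G^{\text{sel}}$), and the environment resamples at rate $\gamma N^2$ (from $N^2 G^{\text{env}}$). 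Verifying the duality identity by a direct generator computation and invoking the standard duality theorem (\cite{EthierKurtz86}, Section~4.4) yields a single representation of $\mathbb E_{(x,z)}[\Phi(X^N_t)\Psi(Z^N_t)]$; since $\Pi$ is an algebra that separates points and is measure-determining on $S$, this pins down the one-dimensional distributions and hence the solution of the martingale problem. The step I expect to be the main obstacle is precisely the incorporation of the environment: the factors $z_k-z_{n+1}$ created by branching must be evaluated consistently along the resampling fitness, so the dual genuinely runs in a random environment. I would handle this by enlarging the dual state to track every created fitness factor together with its evaluation point and integrating over the environment jointly, where the mean-zero assumption $\mathbb E_\nu[Z(v)]=0$ is what keeps the resampled factors tractable. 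An alternative that sidesteps the joint dual is to condition on the autonomous, well-posed $Z^N$ and combine the conditional Fleming-Viot uniqueness with a conditional-martingale-problem argument in the spirit of \cite{EthierKurtz86}, Section~4.10; the finitely-many-jumps structure again keeps the bookkeeping finite.

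Finally, both regularity statements follow from this representation. The map $x\mapsto\langle x^n,\phi\rangle$ is continuous for the weak topology, and the $z$-dependence enters only through finitely many evaluations, so $(x,z)\mapsto\mathbb E_{(x,z)}[\Phi\Psi]$ is continuous; a Stone-Weierstrass argument upgrades this to all $f\in\mathcal C(S)$ on the locally compact space $S$, giving the Feller property. Strong continuity is immediate from the construction, since with probability $1-O(t)$ neither the dual nor the environment experiences any event before time $t$, so $\mathbb E_{(x,z)}[f(X^N_t,Z^N_t)]\to f(x,z)$; boundedness and the semigroup property then give $(X^N_t,Z^N_t)\Rightarrow(X^N_0,Z^N_0)$ as $t\to 0$.
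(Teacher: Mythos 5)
Your core argument coincides with the paper's proof: by taking $\Phi\equiv 1$ you observe that $Z^N$ is an autonomous Markov jump process resampling from $\nu$ at rate $N^2\gamma$, and conditioning on its piecewise-constant, non-accumulating path reduces the problem between jumps to the classical Fleming--Viot martingale problem with frozen selection (i.e.\ $\gamma=0$), which is well-posed by \cite{EthierKurtz1993}; this two-step conditioning is exactly how the paper obtains both existence and uniqueness, so what you present as your ``alternative'' route is in fact the published proof. Where you genuinely differ is in promoting a joint finite-$N$ moment duality to the primary uniqueness argument. The paper never attempts this: it constructs a dual only for the limiting process in Theorem~\ref{T1}, where the fluctuating fitness has already been averaged into the deterministic correlation functions $\chi_{kl}$. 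Your joint dual can in principle be carried out --- when the environment resamples, the accumulated fitness factors freeze into $\nu$-moment functions that get absorbed into the test function, and non-explosion holds because the quadratic coalescence rate $n(n-1)$ dominates the linear branching rate $N\sigma n$ --- but it requires work you only gesture at: your duality functions $\langle x^n,\xi\cdot z_{k_1}\cdots z_{k_m}\rangle$ evaluate $z$ at \emph{sampling coordinates} rather than at fixed points of $I$, so they do not lie in the product class $\Pi$, and one must first extend the martingale problem to such functions (e.g.\ by uniform approximation exploiting the Lipschitz bound on $z$) before a generator identity in the sense of \eqref{eq:GXGY} can even be formulated. The conditioning argument sidesteps all of this, which is presumably why the paper uses it; your instinct that the joint dual is ``the main obstacle'' is accurate, and you should simply lead with the conditioning argument. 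One respect in which you go beyond the paper: its printed proof establishes only well-posedness, whereas you also sketch plausible arguments (continuity of the dual/conditional representation plus Stone--Weierstrass, and a small-time no-event estimate) for the Feller property and strong continuity that the lemma asserts.
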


\begin{proof}
  Fix $N$. First, note that for any solution $(X^N, Z^N)$ of the
  martingale problem, we see that (by setting $\Phi=1$)
  $$ \Big( \Psi(Z_t^N) -  N^2 \gamma\int_0^t (\mathbb E_\nu[\Psi(Z)] - \Psi(Z_s^N))ds \Big)_{t\geq 0}$$
  is a martingale problem. From this, we read off that $Z^N$ is a
  Markov jump process, which jumps from $z$ to $Z \sim \nu$ at rate
  $N^2\gamma$; see \cite{EthierKurtz86}, Section 4.2. Second, we can
  condition on $Z^N$ and construct $X^N$ conditional on $Z^N$. Since
  $Z^N$ is piece-wise constant, and jump points do not accumulate, we
  can solve the resulting martingale problem for $X^N$ (conditional on
  $Z^N$) uniquely between jumps of $Z^N$. Hence, we only require
  well-posedness of the martingale problem for $\gamma=0$. This,
  however, is a classical result in mathematical population genetics;
  see e.g.\ \cite{EthierKurtz1993}. In summary, by this two-step
  procedure, we obtain existence and uniqueness of the
  $(G_N, \Pi, \mu)$-martingale problem.
\end{proof}

\begin{theorem}[Convergence for fast fluctuating environment\label{T1}]
  Given that $X^N_0 \xRightarrow{N\to\infty} X_0 \sim \mu_1$ and
  $2\sigma^2/\gamma < 1$, we find that
  $X^N\xRightarrow{N\to\infty} X$, the unique solution of the
  $(G, \Pi_1, \mu_1)$ martingale problem, where
  \begin{align*}
    \Pi_1 := \{x\mapsto \Phi(x): \Phi(x) & = \Phi^{n,\phi}(x) = \langle x^n, \phi\rangle,
                                           n=1,2,...,
                                           \phi \in \mathcal C(([0,\vartheta] \times [0,1])^n)\}
  \end{align*}
  and, setting
  \begin{align}\notag
    \chi_{k,l}(v) & := \chi(v_k, v_l) := \mathbb E_\nu[Z(v_k)Z(v_l)],
                    \intertext{with}
                    \notag G & =  G^{\text{res}} + G^{\text{mut}} + \overline G^{\text{sel}}
                               \intertext{where $G^{\text{res}}$ and $G^{\text{mut}}$ are
                               as in Definition \ref{def:mp} and, for $\Phi = \Phi^{n,\phi}$,}
                               \notag   \overline G^{\text{sel}}\Phi(x)
    & = \frac{\sigma^2}{\gamma} \sum_{k,l=1 \atop k\neq l}^n\big\langle x^{n+2},
      \phi\cdot (\chi_{kl} - \chi_{n+1,n+2})\big\rangle + 2n \frac{\sigma^2}{\gamma} \sum_{k=1}^n\big\langle x^{n+2},
      \phi\cdot(\chi_{n+1,n+2} - \chi_{k,n+1} \big\rangle \notag
    \\ & \qquad \qquad \qquad \qquad \qquad \qquad \qquad \qquad \qquad +  \frac{\sigma^2}{\gamma} \sum_{k=1}^n
         \big\langle x^{n+2},
         \phi\cdot (\chi_{kk} - \chi_{n+1,n+1})\big\rangle. \label{eq:Gfsel}
  \end{align}
\end{theorem}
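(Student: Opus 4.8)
The plan is to read Theorem~\ref{T1} as a two-scale averaging limit in which the environment $Z^N$ is a fast-mixing component and $X^N$ is the slow component, and to identify the limit through the perturbed-test-function (corrector) method for martingale problems, as in \cite{EthierKurtz86}, Ch.~1 and~4. The decisive structural facts are that $G^{\text{env}}$, viewed as acting in the $z$-variable, is the generator of the jump process that resamples $z\sim\nu$ at rate $\gamma$, so that $\nu$ is its unique invariant measure and, for any $g$ with $\mathbb E_\nu[g]=0$, the Poisson equation $G^{\text{env}}F=g$ is solved by $F=-g/\gamma$; and that by \eqref{eq:Z0} the first-order selection term has vanishing $\nu$-average, $\mathbb E_\nu[G^{\text{sel}}\Phi]=0$. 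Note that $Z^N$ itself does not converge; it is precisely the component that must be averaged out.

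Concretely, for $\Phi\in\Pi_1$ I would look for a corrected test function $F^N=\Phi+\tfrac1N F_1+\tfrac1{N^2}F_2$ and expand $G_NF^N$ in powers of $N$. The order-$N$ term is $N(G^{\text{sel}}\Phi+G^{\text{env}}F_1)$; since $\mathbb E_\nu[G^{\text{sel}}\Phi]=0$, it is killed by the choice $F_1=\tfrac1\gamma G^{\text{sel}}\Phi$. The order-$1$ term is $(G^{\text{res}}+G^{\text{mut}})\Phi+G^{\text{sel}}F_1+G^{\text{env}}F_2$; averaging against $\nu$ and using $\mathbb E_\nu[G^{\text{env}}F_2]=0$ forces the effective generator to be
$$G\Phi=(G^{\text{res}}+G^{\text{mut}})\Phi+\mathbb E_\nu\big[G^{\text{sel}}F_1\big]=(G^{\text{res}}+G^{\text{mut}})\Phi+\tfrac1\gamma\,\mathbb E_\nu\big[G^{\text{sel}}G^{\text{sel}}\Phi\big],$$
after which $F_2$ is defined by solving the remaining mean-zero Poisson equation for $G^{\text{env}}$. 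A direct computation of $\tfrac1\gamma\mathbb E_\nu[G^{\text{sel}}G^{\text{sel}}\Phi]$, in which every surviving pair of fitness factors is replaced by the covariance $\chi_{kl}=\mathbb E_\nu[Z(v_k)Z(v_l)]$ and the remaining factors vanish by \eqref{eq:Z0}, yields exactly $\overline G^{\text{sel}}$ as in \eqref{eq:Gfsel}; this is the routine but bookkeeping-heavy part. By construction $G_NF^N=G\Phi+O(1/N)$ with an error polynomial in $(x,z)$, and $\|F^N-\Phi\|=O(1/N)$.

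With the corrector in hand I would run the two standard steps. First, tightness: the state space $\mathcal P([0,\vartheta]\times I)$ is compact, so compact containment is automatic, and the bounds $G_NF^N=O(1)$ and $F^N-\Phi=O(1/N)$ give, via the generator-based relative-compactness criterion of \cite{EthierKurtz86}, Ch.~3, control of the oscillations of $\Phi(X^N)$ and hence relative compactness of $\{X^N\}$ in Skorohod path space. Second, identification: for each limit point $X$, the corrected martingales $F^N(X^N_t,Z^N_t)-\int_0^tG_NF^N(X^N_s,Z^N_s)\,ds$ together with the estimates above show that $\Phi(X_t)-\int_0^tG\Phi(X_s)\,ds$ is a martingale, i.e.\ $X$ solves the $(G,\Pi_1,\mu_1)$ martingale problem; convergence of the full sequence then follows from uniqueness of solutions to this limiting martingale problem.

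The main obstacle is this last well-posedness, and it is where the hypothesis $2\sigma^2/\gamma<1$ enters. The effective operator $\overline G^{\text{sel}}$ raises the sampling order from $n$ to $n+2$, so it is not a bounded perturbation of $G^{\text{res}}+G^{\text{mut}}$, and the natural route is moment duality in the spirit of \cite{EthierKurtz1993}, with a branching-coalescing dual in which selection events create new lineages. Well-posedness should follow provided this dual does not explode, and the smallness condition $2\sigma^2/\gamma<1$ is what controls the branching (selection) rate relative to the coalescence/resampling rate, guaranteeing non-explosion and finiteness of the mixed moments. A secondary technical point is that the functions in $\Pi$ are continuous but not bounded on $\mathcal C_L(I)$, so turning the formal expansion above into genuine uniform estimates (and ensuring the covariances $\chi_{kl}$ are finite) requires moment bounds on $\nu$, i.e.\ finiteness of $\mathbb E_\nu[\|Z\|^k]$.
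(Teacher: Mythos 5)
Your proposal is correct and follows essentially the same route as the paper: the corrector/Poisson-equation averaging you describe is exactly the content of Corollary~1.7.8 of Ethier--Kurtz that the paper invokes (with corrector $h=\tfrac{1}{\gamma}G^{\text{sel}}\Phi$ and effective generator $\bar G\Phi=\mathbb E_\nu\big[(G^{\text{res}}+G^{\text{mut}})\Phi+\tfrac1\gamma G^{\text{sel}}G^{\text{sel}}\Phi\big]$), and uniqueness is likewise obtained there by a function-valued branching--coalescing dual whose non-explosion is guaranteed by $2\sigma^2/\gamma<1$. The only notable difference is procedural: you argue convergence via tightness plus identification of limit points, whereas the paper concludes via semigroup convergence once the duality argument shows that $\bar G$ generates a strongly continuous contraction semigroup.
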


\begin{remark}[Techniques needed for the proof]\label{rem:178}
  The proof of Theorem~\ref{T1} is an application of Corollary~1.7.8
  in \cite{EthierKurtz86}, together with duality techniques.
  \begin{enumerate}
  \item Corollary~1.7.8 in \cite{EthierKurtz86} is dealing with
    strongly continuous contraction semigroups; see
    Remark~\ref{rem:notation}. Let us briefly recall this result. For
    some locally compact and separable $(E,r)$, let
    $L := \mathcal C_b(E)$, equipped with the topology of uniform
    convergence on compacts. For operators $G_i$ with domain
    $\mathcal D(G_i)$, $i=0,1,2$, assume the following:
    \begin{enumerate}
    \item $G_2$ generates a strongly continuous contraction semigroup
      $(S_t)_{t\geq 0}$ on $L$, such that
      $$ \lim_{\lambda\to 0+} \lambda \int_0^\infty e^{-\lambda t}S_tf dt =: Pf \text{ exists for all $f\in L$};$$
    \item
      $\mathcal D := \mathcal D(G_0) \cap \mathcal D(G_1) \cap \mathcal
      D(G_2)$ is a core for $G_2$;
    \item For $N$ sufficiently large, $G_0 + N\cdot G_1 + N^2 \cdot G_2$
      generates a strongly continuous contraction semigroup
      $(T^N(t))_{t\geq 0}$ on $L$.
    \end{enumerate}
    For
    $f \in D\subseteq \{f: \mathcal D(G_0) \cap \mathcal D(G_1):
    G_2f=0\}$, set
    $$D_f := \{h \in \mathcal D: G_2h = -G_1f\}$$
    and define for any $f\in D$ and $h\in D_f$
    \begin{align}
      \label{eq:Poi}
      \bar G f = PG_0f + PG_1 h.
    \end{align}
    Then, $\bar G$ is dissipative and if its closure generates a
    strongly continuous contraction semigroup $(T(t))_{t\geq 0}$ on
    $\bar D$, then $T^N(t)f\xrightarrow{N\to\infty} T(t)f$ for all
    $t\geq 0$, uniformly on bounded intervals.
    \\
    Let us be a bit more precise how to apply the above scenario. In
    particular, we are dealing with the special situation that
    $S = S_1\times S_2$,
    \begin{itemize}
    \item[(A1)] $G_2$ has the form, for some $\nu \in \mathcal P(S_2)$
      and $\gamma>0$,
      $$ G_2f(x,z) = \gamma \big(\mathbb E_\nu[f(x,Z)] - f(x,z)\big);$$
    \item[(A2)] $G_1$ satisfies $\mathbb E_\nu[G_1f(x,Z)]=0$ if $f$ only
      depends on $x$.
    \end{itemize}
    In this situation, $G_2$ generates a strongly continuous contraction
    semigroup $(S_t)_{t\geq 0}$ on $\mathcal C_b(E)$, which has the form
    $$ S_tf(x,z) = e^{-\gamma t} f(x,z) + (1-e^{-\gamma t}) \mathbb E_\nu[f(x,Z)].$$
    Clearly, since
    $S_tf(x,z) = e^{-\gamma t} f(x,z) + (1-e^{-\gamma t})\mathbb
    E_\nu[f(x,Z)]$,
    \begin{align*}
      \lambda \int_0^\infty e^{-\lambda t}S_tf(x,z) dt
      & =
        \frac{\lambda}{\lambda + \gamma}(f(x,z) - \mathbb E_\nu[f(x,Z)]) +
        \lambda \int_0^\infty e^{-\lambda t}\mathbb E_\nu[f(x,Z)] dt
      \\ & \xrightarrow{\lambda\to 0} \mathbb E_\nu[f(x,Z)] =:Pf(x,z).
    \end{align*}
    Then, for $G_2f=0$, we need that $(x,z)\mapsto f(x,z)$ only depends
    on $x$. In this case, we have by (A1) and (A2)
    \begin{align*}
      G_2G_1f(x,z) = \gamma \big(\mathbb E_\nu[G_1f(x,Z)] - G_1f(x,z)\big) = - \gamma G_1f(x,z),
    \end{align*}
    i.e.\ $h = \tfrac 1\gamma G_1f$ is a solution of $G_2h = -G_1f$. In
    total, we find that (abusing notation by writing $x\mapsto f(x)$ if
    $f$ only depends on $x$), \eqref{eq:Poi} transforms to
    \begin{align}
      \bar Gf(x) = \mathbb E_\nu\big[G_0f(x,Z) + \tfrac 1\gamma G_1G_1f(x,Z)\big].\label{eq:129}
    \end{align}
    If we can show that $\bar G$ generates a strongly continuous
    contraction semigroup (which is implied by well-posedness of the
    $(\bar G, D)$-martingale problem), we have convergence.
  \item It remains to show well-posedness of the $\bar G$-martingale
    problem as well as the Feller property. At least, existence of a
    solution of the martingale problem follows by general theory; see
    Chapter 4.5 of \cite{EthierKurtz86}, provided that the Markov
    processes $X^N$ with semigroups $T^N$ satisfy the compact
    containment condition. Indeed, since
    $|| \tfrac 1N h|| \xrightarrow{N\to\infty} 0$ and
    \begin{align*}
      (G_0 + N\cdot G_1 + N^2 \cdot G_2)(f + \tfrac 1N h)
      & =
        G_0 f(x) + G_1 h(x,z) + N\cdot (G_1f + G_2h)  + o(1)
      \\ & = G_0 f(x) + G_1 h + o(1),
    \end{align*}
    we find generator convergence.\\
    For uniqueness and the Feller property, we will be using a duality
    argument (see Chapter 4.4 in \cite{EthierKurtz86}). Recall that
    $X$ (i.e.\ a solution of the ($\bar G, D)$-martingale problem) is
    dual to some stochastic process $Y$ with (separable) state space
    $\Upsilon$ with respect to $H: S\times \Upsilon \to\mathbb R$
    bounded and measurable, if
    \begin{align} \notag
      \mathbb E_x[H(X_t,y)] = \mathbb E_y[H(x,Y_t)]
    \end{align}
    for all $t, x, y$. If
    $\Pi := \{H(.,y): y\in\Upsilon\} \subseteq D$ and $Y$ is a Markov
    process with generator $G_Y$, and if $H(x,.)$ is in the domain of
    $G_Y$ for all $x$, the latter equality is implied by
    \begin{align}
      \label{eq:GXGY}
      \bar GH(.,y)(x) = G_YH(x,.)(y)
    \end{align}
    since
    $$ \frac{d}{ds} \mathbb E[H(X_s, Y_{t-s})] = \mathbb E[\bar GH(., Y_{t-s})(X_s) - G_YH(X_s, .)(Y_{t-s})] = 0$$
    on a probability space where $X$ and $Y$ are independent.  If
    $\Pi$ is separating, existence of $Y$ implies uniqueness of the
    $(\bar G, D)$-martingale problem; see Proposition~4.4.7 of
    \cite{EthierKurtz86}. Moreover, if $H$ is bounded and continuous,
    we find that
    $x\mapsto \mathbb E_x[H(X_t,y)] = \mathbb E_y[H(x,Y_t)]$ is
    continuous by dominated convergence. If $\Pi$ is convergence
    determining and $Y$ is Feller, this implies that $X$ is Feller as
    well.
  \end{enumerate}
\end{remark}

\begin{proof}[Proof of Theorem~\ref{T1}]
  We use Remark~\ref{rem:178}.1 with
  $S_1 = \mathcal P([0, \vartheta] \times I)$, $S_2 = \mathcal C_L(I)$
  and $G_0 = G^{\text{res}} + G^{\text{mut}}$, $G_1 = G^{\text{sel}}$
  and $G_2 = G^{\text{env}}$. (A1) is satisfied due to the form of
  $G^{\text{env}}$ in Definition~\ref{def:mp}. If $\Phi$ only depends
  on $x$, (A2) is satisfied since $G^{\text{sel}}\Phi$ depends on $z$
  only linearly and \eqref{eq:Z0} holds. If $\Phi\Psi \in \Pi$ with
  $\Phi = \Phi^{n, \phi}, \Psi = \Psi^{m,u}$ only depends on $x$, we
  have that $\Psi=$const and $h = -\frac{1}{\gamma}G^{\text{sel}}\Phi$
  solves $G^{\text{env}} h = -G^{\text{sel}}\Phi$. Therefore,
  \eqref{eq:129} gives
  \begin{align*}
    \bar G\Phi(x) &
                    = G^{\text{res}}\Phi(x) + G^{\text{mut}}\Phi(x) + \tfrac 1 \gamma
                    \mathbb E_\nu[G^{\text{sel}}G^{\text{sel}}\Phi(x,Z)].
  \end{align*}
  In order to compute that last term, we define for $v\in I^n$
  $$ \chi_{k,l}(v) := \chi(v_k, v_l) := \mathbb E_\nu[Z(v_k)Z(v_l)]$$ and obtain, for $\phi$
  depending only on the first $n$ coordinates at both loci
  \begin{align*}
    \overline G^{\text{sel}}\Phi(x) & := \frac{1}{\gamma} \mathbb E_\nu [G^{\text{sel}} G^{\text{sel}} \Phi(x,Z)]
    \\ &
         = \frac{\sigma}{\gamma}\sum_{l=1}^n \mathbb E_\nu \big[G^{\text{sel}} \big\langle x^{n+1},
         \phi\cdot (Z_{l} - Z_{n+1})\big\rangle \big]
    \\ & = \frac{\sigma^2}{\gamma} \sum_{l=1}^n \sum_{k=1}^{n+1} \mathbb E_\nu \Big[\big\langle x^{n+2},
         \phi\cdot (Z_{l} - Z_{n+1})\cdot (Z_{k} - Z_{n+2})\big\rangle\Big]
    \\ & = \frac{\sigma^2}{\gamma} \sum_{k,l=1 \atop k\neq l}^n\big\langle x^{n+2},
         \phi\cdot (\chi_{kl} - 2\chi_{k,n+1} + \chi_{n+1,n+2})\big\rangle
    \\ & \qquad + \frac{\sigma^2}{\gamma} \sum_{l=1}^n\big\langle x^{n+2},
         \phi\cdot (\chi_{l,l} - 2 \chi_{l,n+1} + 2 \chi_{n+1,n+2} - \chi_{n+1,n+1} )\big\rangle
    \\ & = \frac{\sigma^2}{\gamma} \sum_{k,l=1 \atop k\neq l}^n\big\langle x^{n+2},
         \phi\cdot (\chi_{kl} - \chi_{n+1,n+2})\big\rangle + 2n \frac{\sigma^2}{\gamma} \sum_{k=1}^n\big\langle x^{n+2},
         \phi\cdot(\chi_{n+1,n+2} - \chi_{k,n+1} \big\rangle
    \\ & \qquad \qquad \qquad \qquad \qquad \qquad \qquad \qquad \qquad +  \frac{\sigma^2}{\gamma} \sum_{k=1}^n
         \big\langle x^{n+2},
         \phi\cdot (\chi_{kk} - \chi_{n+1,n+1})\big\rangle.
  \end{align*}
  (We have used the symmetry relationship
  $\langle x^{n+2}, \phi \cdot Z_{n+1}\rangle = \langle x^{n+2}, \phi
  \cdot Z_{n+2}\rangle$.) This already establishes the form of the
  generator appearing in Theorem~\ref{T1} and existence of the
  $(G, \Pi_1)$-martingale problem follows as in
  Remark~\ref{rem:178}.2.

  For uniqueness, we use duality. The dual process will be similar to
  the one of the tree-valued Fleming-Viot process with mutation and
  selection given in \cite{depperschmidt2012}. The goal is to use
  \eqref{eq:GXGY}, and therefore, we have to rewrite the generator
  terms. We define for $u=(u_1,u_2,...)$
  \begin{align*}
    \bar{\sigma}_l(u) & = (u_{i-1_{\{i > l \}}}) = (u_1,...,u_{l},u_{l},u_{l+1},...),
    \\
    {\sigma}_l(u) & = (u_{i+1_{\{i \geq l \}}}) = (u_1,...,u_{l-1},u_{l+1},u_{l+2},...).
  \end{align*}
  We note that, for $\phi$ depending only on the first $n$
  coordinates, and $1\leq k\neq l\leq n$
  \begin{align*}
    \langle x^n, \phi \circ \theta_{kl}\rangle
    & =
      \langle x^{n-1}, \phi \circ \theta_{kl} \circ \bar\sigma_l \rangle,
    \\
    \mathbb E_\nu [\langle x^{n+1}, \phi \cdot Z_{n+1}\rangle]
    & =
      \mathbb E_\nu [ \langle x^{n+1}, (\phi \circ \sigma_k)\cdot Z_k\rangle],
    \\
    \langle x^{n+2}, \phi \cdot \chi_{n+1, n+2}\rangle
    & = \langle x^{n+2}, (\phi \circ \sigma_k)\cdot \chi_{k,n+2}\rangle =
      \langle x^{n+2}, (\phi \circ \sigma_k \circ \sigma_l)\cdot \chi_{k,l}\rangle,
  \end{align*}
  holds, since integrating with respect to the product measure $x^n$
  does not depend on the order of coordinates.

  Therefore, we can write for $\Phi = \Phi^{n,\phi}$
  \begin{equation}
    \label{eq:213}
    \begin{aligned}
      G^{\text{res}}\langle x^n, \phi\rangle & = \sum_{k,l=1 \atop
        k\neq l}^n \langle x^{n-1}, \phi \circ\theta_{k,l} \circ
      \bar\sigma_l\rangle - \langle x^n , \phi\rangle, \\
      G^{\text{mut}} \langle x^n, \phi\rangle & = \vartheta \cdot
      \sum_{k=1}^n \Big\langle x^n, \frac{\pi_{k,A}}{\vartheta} \cdot
      \beta_{k,B}\phi + \Big(1 -
      \frac{\pi_{k,A}}{\vartheta}\Big)\cdot\phi\Big\rangle - \langle
      x^n, \phi\rangle, \\ \overline G^{\text{sel}}\langle x^n,\phi
      \rangle &= \frac{\sigma^2}{\gamma} \sum_{k,l=1 \atop k\neq l}^n
      (\langle x^{n+2}, \phi \cdot \chi_{kl} + (\phi \circ \sigma_k
      \circ \sigma_l)
      \cdot(1-\chi_{kl})\rangle - \langle x^n, \phi  \rangle)\\
      & + 2n \frac{\sigma^2}{\gamma}
      \sum_{k=1}^n ( \langle x^{n+2},  (\phi \circ \sigma_k) \cdot \chi_{k,n+2} + \phi \cdot (1-\chi_{k,n+2})  \rangle - \langle x^n, \phi  \rangle) \\
      & + \frac{\sigma^2}{\gamma} \sum_{k=1}^n \big(\langle x^{n+2},
      (\phi \cdot \chi_{k,k} + (\phi \circ \sigma_k) \cdot
      (1-\chi_{k,k}) \rangle - \langle x^n, \phi \rangle).
    \end{aligned}
  \end{equation}
  With this reformulation, we can construct a function-valued dual
  process as follows. Taking the state space
  \begin{align*}
    \Upsilon = \bigcup_{n=0}^\infty
    \Upsilon_n, \qquad \Upsilon_n = \mathcal C(([0,\vartheta] \times [0,1])^n),
  \end{align*}
  we consider a pure jump process $\Xi = (\xi_t)_{t\geq 0}$ with
  transitions from $\xi \in \Upsilon_n$ to
  \begin{align*}
    \xi \circ \theta_{k,l} \circ \bar \sigma_l & \in \Upsilon_{n-1}
                                                 \text{ at rate 1 for each unordered pair $1\leq k\neq l\leq n$},
    \\\frac{\pi_{k,A}}{\vartheta}
    \cdot \beta_{k,B}\cdot \xi +
    \Big(1 - \frac{\pi_{k,A}}{\vartheta}\Big)\cdot\xi & \in \Upsilon_n
                                                         \text{ at rate $\vartheta$ for each  $1\leq k\leq n$},
    \\
    \xi \cdot \chi_{kl} + (\xi \circ \sigma_k \circ \sigma_l)
    \cdot(1-\chi_{kl}) & \in \Upsilon_{n+2} \text{ at rate $ \frac{\sigma^2}{\gamma} $ for each unordered pair $1\leq k\neq l\leq n$},
    \\ (\xi \circ \sigma_k) \cdot \chi_{k,n+2} + \xi \cdot (1-\chi_{k,n+2}) & \in \Upsilon_{n+2} \text{  at rate $ 2n \frac{\sigma^2}{\gamma} $ for each $1\leq k\leq n$},
    \\ \xi \cdot \chi_{k,k} + (\xi \circ \sigma_{k}) \cdot (1-\chi_{k,k}) & \in \Upsilon_{n+1} \text{  at rate $ \frac{\sigma^2}{\gamma} $ for each $1\leq k\leq n$}.
  \end{align*}
  Then, for $H: S \times \Upsilon$, given by
  $H(x,\xi) = \langle x^n, \xi\rangle$ for $\xi\in\Upsilon_n$, we have
  established \eqref{eq:GXGY}, i.e.\ the generator of $\Xi$ for
  $\xi\in\Upsilon_n$ is
  $(G^{\text{res}} + G^{\text{mut}} + \overline G^{\text{sel}})
  \langle x^n, \xi\rangle$ with $G^{\text{res}}, G^{\text{mut}}$ and
  $\overline G^{\text{sel}}$ as the right hand sides in
  \eqref{eq:213}. In other words, $\Xi$ and $X$, a solution of the
  $G$-martingale problem are dual, provided that existence for $\Xi$
  can be guaranteed. Here, we have to take into account that the
  number of dependent variables, $n$, can explode. This number
  decreases at rate $n(n-1)$ and increases by two at rate
  $n(n+1)\sigma^2/\gamma$ and by one at rate $\sigma^2/\gamma
  n$. Therefore, explosion cannot occur for $2\sigma^2/\gamma < 1$ and
  from Proposition~4.4.7 of \cite{EthierKurtz86}, uniqueness for the
  $G$-martingale problem follows in this case. Since
  $\{H(.,\xi): \xi\in\Upsilon\}$ is separating and convergence
  determining (see e.g.\ Example~5 in
  \citealp{DepperschmidtGrevenPfaffelhuber2019}), we have shown that
  $\bar G$ generates a strongly continuous contraction semigroup and
  the proof of Theorem~\ref{T1} is complete; see
  Remark~\ref{rem:178}.2.
\end{proof}

\section{Specialization to a finite dimensional system}
\label{sec_3}
We will now specialize Theorem~\ref{T1} to a finite-dimensional
system. Precisely, since we have two loci, the minimal number of
dimensions is $2\times 2$.  So, only four types will be present, which
will be denoted ${\ell0}, {\ell1}, {h0}, {h1}$. For
$0\leq \vartheta_\ell \leq \vartheta_h \leq\vartheta$, their
frequencies are given through
$x \in \mathcal P([0,\vartheta] \times I)$ by
\begin{align} \notag
  x_{ai} := \Phi_{ai}(x) :=
  x(\{\vartheta_a\} \times \{i\}) = \langle x, 1_{\{\vartheta_a\} \times \{i\}} \rangle,
  \qquad (a,i) \in \{\ell, h\}\times \{0,1\}.
\end{align}
For mutation, we consider the case that each mutation event (either at
rate $\vartheta_\ell$ or $\vartheta_h$) results in type~0 at the
$B$-locus with probability $r \in [0,1]$. For selection, let
$z: \{0,1\} \to \{-\tfrac 12, \tfrac 12\}$ be given by
$z(0) = \tfrac 12, z(1) = -\tfrac 12$ and
\begin{align*}
  \nu = \tfrac 12 (\delta_{z} + \delta_{-z}).
\end{align*}
Consider the solution $X^N$ of the martingale problem from
Definition~\ref{def:mp} in this case, which exists uniquely by
Lemma~\ref{l:uniN}. Letting
$X^N_{ai}, (a,i) \in \{\ell,h\} \times\{0,1\}$ be as above, using the
martingale representation theorem (see e.g.\ Theorem~16.12. of
\citealp{Kallenberg2002}), it is straight-forward to see that
$X^N = (X^N_{\ell 0}, X^N_{\ell 1}, X^N_{h0}, X^N_{h1})$ is a weak
solution of the system of SDEs
\begin{equation}
  \label{eq:sSDEs}
  \begin{aligned}
    dX^N_{\ell0} & = \sigma N Z^N X^N_{\ell0}X^N_1 dt + \theta_\ell
    (rX^N_{\ell1}-(1-r)X^N_{\ell0}) dt \\ & \qquad \qquad \qquad
    \qquad \qquad + \sqrt{X^N_{\ell0}X^N_{\ell1}} dW_{1} +
    \sqrt{X^N_{\ell0}X^N_{h0}} dW_{2} + \sqrt{X^N_{\ell0}X^N_{h1}}
    dW_{3},
    \\
    dX^N_{\ell1} & = - \sigma N Z^N X^N_{\ell1}X^N_0 dt + \theta_\ell
    ((1-r)X^N_{\ell0}-rX^N_{\ell1}) dt \\ & \qquad \qquad \qquad
    \qquad \qquad - \sqrt{X^N_{\ell1}X^N_{\ell0}} dW_{1} +
    \sqrt{X^N_{\ell1}X^N_{h0}} dW_{4} + \sqrt{X^N_{\ell1}X^N_{h1}}
    dW_{5},\\
    dX^N_{h0} & = \sigma N Z^N X^N_{h0}X^N_1 dt + \theta_h
    (rX^N_{h1}-(1-r)X^N_{h0}) dt \\ & \qquad \qquad \qquad \qquad
    \qquad - \sqrt{X^N_{h0}X^N_{\ell0}} dW_{2} -
    \sqrt{X^N_{h0}X^N_{\ell1}} dW_{4} + \sqrt{X^N_{h0}X^N_{h1}}
    dW_{6},
    \\
    dX^N_{h1} & = - \sigma N Z^N X^N_{h1}X^N_{0} dt + \theta_h
    ((1-r)X^N_{h0}-rX^N_{h1}) dt \\ & \qquad \qquad \qquad \qquad
    \qquad - \sqrt{X^N_{h1}X^N_{\ell0}} dW_{3} -
    \sqrt{X^N_{h1}X^N_{\ell1}} dW_{5} - \sqrt{X^N_{h1}X^N_{h0}}
    dW_{6},
  \end{aligned}
\end{equation}
with $X^N_i = X^N_{hi} + X^N_{\ell i}$, $i=0,1$, independent Brownian
motions $W_1,...,W_6$, and $Z^N$ (the fitness difference between
types~0 and 1) changes from $-1$ to $+1$ and back at rate
$N^2 \tfrac \gamma 2$.

\begin{theorem}[Convergence for fast fluctuating environment\label{T2}]
  \sloppy For weak solutions $(X^N)_{N=1,2,...}$ of \eqref{eq:sSDEs},
  assume that $X^N(0) \xRightarrow{n\to\infty}X_0$ and
  $2\sigma^2/\gamma < 1$. Then,
  $(X^N_{\ell0}, X^N_{\ell1}, X^N_{h0}, X^N_{h1})
  \xRightarrow{N\to\infty} X = (X_{\ell0}, X_{\ell1}, X_{h0},
  X_{h1})$, the unique weak solution of
  \begin{equation}
    \label{eq:limit}
    \begin{aligned}
      dX_{\ell0} & = \tfrac{\sigma^2}{\gamma}X_{\ell0}X_1(X_1-X_0)dt +
      \theta_\ell (rX_{\ell1} - (1-r)X_{\ell0}) dt \\ & \qquad \qquad
      \qquad + \sqrt{X_{\ell0}X_{\ell1}} dW_{1} +
      \sqrt{X_{\ell0}X_{h0}} dW_{2} + \sqrt{X_{\ell0}X_{h1}} dW_{3} +
      \sigma\sqrt{2/\gamma} X_{\ell0}X_1 dW,
      \\
      dX_{\ell1} & = \tfrac{\sigma^2}{\gamma}X_{\ell1}X_0(X_0-X_1) dt
      + \theta_\ell ((1-r)X_{\ell0} - rX_{\ell1}) dt \\ & \qquad \qquad
      \qquad - \sqrt{X_{\ell1}X_{\ell0}} dW_{1} +
      \sqrt{X_{\ell1}X_{h0}} dW_{4} + \sqrt{X_{\ell1}X_{h1}} dW_{5}-
      \sigma\sqrt{2/\gamma} X_{\ell1}X_0 dW
      \\
      dX_{h0} & = \tfrac{\sigma^2}{\gamma}X_{h0}X_1(X_1-X_0)dt +
      \theta_h (rX_{h1} - (1-r)X_{h0}) dt \\ & \qquad \qquad \qquad -
      \sqrt{X_{h0}X_{\ell0}} dW_{2} - \sqrt{X_{h0}X_{\ell1}} dW_{4} +
      \sqrt{X_{h0}X_{h1}} dW_{6} + \sigma\sqrt{2/\gamma} X_{h0}X_1 dW,
      \\
      dX_{h1} & = \tfrac{\sigma^2}{\gamma}X_{h1}X_0(X_0-X_1) dt +
      \theta_h ((1-r)X_{h0}-rX_{h1}) dt \\ & \qquad \qquad \qquad -
      \sqrt{X_{h1}X_{\ell0}} dW_{3} - \sqrt{X_{h1}X_{\ell1}} dW_{5} -
      \sqrt{X_{h1}X_{h0}} dW_{6}- \sigma\sqrt{2/\gamma} X_{h1}X_0 dW,
    \end{aligned}
  \end{equation}
  with independent Brownian motions $W,W_1,...,W_6$ with initial
  condition $X_0$.
\end{theorem}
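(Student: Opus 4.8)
The plan is to deduce Theorem~\ref{T2} from Theorem~\ref{T1} by specializing the limiting martingale problem to the four-type simplex, so that the only genuine work is to recognize the resulting diffusion generator as that of the system \eqref{eq:limit}. First I would note that the process $X^N$ solving \eqref{eq:sSDEs} is precisely the four-type projection of the measure-valued process of Definition~\ref{def:mp}: this is exactly what the martingale representation step preceding the theorem establishes, with $x$ supported on the four points $(\vartheta_a,i)$, the prescribed mutation kernel, and $\nu=\tfrac12(\delta_z+\delta_{-z})$. Since each frequency $x_{ai}=\langle x,1_{\{\vartheta_a\}\times\{i\}}\rangle$ is a bounded continuous functional of $x\in\mathcal P([0,\vartheta]\times I)$, the Skorohod path convergence $X^N\Rightarrow X$ delivered by Theorem~\ref{T1} (available because $2\sigma^2/\gamma<1$) pushes forward to weak convergence of the four-dimensional processes, and the limit $X$ solves the $(G,\Pi_1,\mu_1)$ martingale problem with $\mu_1$ the law of $X_0$.

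Next I would evaluate $G=G^{\text{res}}+G^{\text{mut}}+\overline G^{\text{sel}}$ on the coordinates $x_{ai}$ and their products. Because $\nu$ is concentrated on $\pm z$ with $z(0)=\tfrac12,\ z(1)=-\tfrac12$, the two-point function factorizes, $\chi(v,v')=\mathbb E_\nu[Z(v)Z(v')]=z(v)z(v')$, so in particular the diagonal terms $\chi_{kk}\equiv\tfrac14$ cancel everywhere. The efficient route is to use the identity $\overline G^{\text{sel}}=\tfrac1\gamma\mathbb E_\nu[G^{\text{sel}}G^{\text{sel}}]$ from the proof of Theorem~\ref{T1}, and, for each fixed environment, to read $G^{\text{sel}}$ as the classical first-order selection operator on the simplex with drift $b_{ai}(z)=\sigma x_{ai}(z(i)-\bar z)$, $\bar z=\sum_i z(i)X_i$. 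A short computation gives $z(i)-\bar z=e_iX_{\bar i}$ in the environment $z$ and its negative in $-z$, where $e_0=+1,\ e_1=-1$ and $X_{\bar 0}=X_1,\ X_{\bar 1}=X_0$. Squaring the first-order operator, the second-order part of $(G^{\text{sel}})^2$ carries the covariance $2\,b_{ai}(z)b_{bj}(z)$ and its first-order part carries the drift $(b(z)\cdot\nabla)b_{ai}(z)$; averaging over $\nu$ then yields the selection-induced diffusion matrix $\tfrac{2\sigma^2}{\gamma}e_ie_j\,x_{ai}X_{\bar i}\,x_{bj}X_{\bar j}$ and the selection drift $\tfrac{\sigma^2}{\gamma}x_{ai}X_{\bar i}(X_{\bar i}-X_i)$.

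I would then assemble the diffusion. The resampling operator $G^{\text{res}}$ contributes the usual Wright--Fisher covariance $x_{ai}(\delta_{(ai),(bj)}-x_{bj})$, which is reproduced by the six independent motions $W_1,\dots,W_6$ with the square-root coefficients of \eqref{eq:limit} (a routine check that the pairwise coefficients recover the $4\times4$ Wright--Fisher matrix). The selection-induced matrix just computed is \emph{rank one}, namely $c\,c^{\top}$ with $c_{ai}=\sigma\sqrt{2/\gamma}\,e_i x_{ai}X_{\bar i}$, and is therefore realized by the single common Brownian motion $W$ with exactly the signed coefficients $\pm\sigma\sqrt{2/\gamma}\,X_{ai}X_{\bar i}$ appearing in \eqref{eq:limit}. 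Adding the mutation drift $\theta_a(rX_{a1}-(1-r)X_{a0})$ coming from $G^{\text{mut}}$, the operator $G$ coincides with the diffusion generator of \eqref{eq:limit}. By It\^o's formula any weak solution of \eqref{eq:limit} solves the $(G,\Pi_1,\mu_1)$ martingale problem and conversely, so the uniqueness in law supplied by Theorem~\ref{T1} both identifies the limit $X$ as a weak solution of \eqref{eq:limit} and gives well-posedness of \eqref{eq:limit}.

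The step I expect to be the crux is the verification that the environmental contribution to the diffusion is rank one, i.e.\ that the fast environment leaves a \emph{single} scalar noise $dW$ shared by all four coordinates rather than a genuinely multivariate diffusion. This is forced by the factorization $\chi(v,v')=z(v)z(v')$, which is special to $\nu$ being concentrated on $\pm z$ and is exactly what makes $\mathbb E_\nu[b(z)b(z)^{\top}]$ a rank-one matrix; keeping track of the signs $e_i$ and confirming that the $\chi_{kk}$ terms cancel is where the bookkeeping is heaviest, but no difficulty beyond Theorem~\ref{T1} arises.
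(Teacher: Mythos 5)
Your proposal is correct and takes essentially the same route as the paper: both deduce the result from Theorem~\ref{T1} by identifying drift and covariation of the limiting martingale problem on the four coordinates (mutation drift from $G^{\text{mut}}$, Wright--Fisher covariance from $G^{\text{res}}$, and the rank-one selection drift and noise from $\overline G^{\text{sel}}$), and then pass between the martingale problem and the SDE system via the martingale representation theorem. Your evaluation of $\overline G^{\text{sel}}$ by squaring the per-environment first-order selection operator and averaging is only a repackaging of the paper's substitution of $\chi_{kl} = \tfrac12 1_{v_k=v_l}-\tfrac14$ into \eqref{eq:Gfsel} (which was itself derived as $\tfrac1\gamma\mathbb E_\nu[G^{\text{sel}}G^{\text{sel}}]$), and your resulting drift $\tfrac{\sigma^2}{\gamma}x_{ai}x_{1-i}(x_{1-i}-x_i)$ and covariation $(-1)^{i+j}\tfrac{2\sigma^2}{\gamma}x_{ai}x_{bj}x_{1-i}x_{1-j}$ agree exactly with the paper's \eqref{eq:toshow1} and \eqref{eq:toshow2}.
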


\begin{remark}[Evolution of $X_h$ and $X_0$]
  Writing $X_h = X_{h0} + X_{h1}$ and and $X_\ell = 1-X_h$, we also
  have
  \begin{align}\label{eq:limitCor}
    dX_h & = \frac{\sigma^2}{\gamma}(X_{h0}X_{\ell1} - X_{h1}X_{\ell0})(X_1 - X_0) dt + \sqrt{X_hX_\ell} dW'
           + \sigma\sqrt{2/\gamma}
           (X_{h0}X_{\ell1} - X_{h1}X_{\ell0}) dW,
  \end{align}
  with independent Brownian motions $W, W'$.  In the same way we can
  set $X_0 = X_{h0} + X_{\ell0}$ and $X_1 = 1-X_0$, and get
  \begin{align*}
    dX_0 & = \frac{\sigma^2}{\gamma} X_0 X_1 (X_1 - X_0) dt
           + \vartheta_\ell (r - X_0) + (\vartheta_h - \vartheta_\ell) (r X_{h1} - (1-r)X_{h0}) dt
    \\ & \qquad \qquad \qquad \qquad \qquad \qquad \qquad \qquad \qquad \qquad + \sqrt{X_0 X_1} dW''
         + \sigma\sqrt{2/\gamma}
         X_0X_1 dW
  \end{align*}
  with independent Brownian motions $W, W''$.
\end{remark}

\begin{remark}[Comparison with \cite{Gillespie1981}]
  Gillespie has considered a similar diffusion for a mutation modifier
  locus in diploids~\cite{Gillespie1981}.  While the mutation rates
  differ in Gillespie's model compared to the as we do not consider
  heterozygotes in our haploid model, the remaining diffusion terms of
  a symmetric semi-dominant model from Gillespie are similar to our
  setting.

 To see this consider equation (5) in \cite{Gillespie1981}.
 The variable $p_1 = 1 - q_1$ corresponds to our $X_0$, and $p_2 = 1 - q_2$ to $X_h$.
 In the symmetric semi-dominant model Gillespie set $A=0$ and $B=2$.
 Thus, ignoring all terms with mutation rates, %
 we get
 \begin{align*}
 dp_1 & = p_1q_1\left( A + B(\frac12 - p_1) \right)  + p_1q_1 dW\\
  & = X_0X_1 (X_1 - X_0) dt + X_0X_1 dW, \text{ and}\\
  dp_2 & = D\left( A + B(\frac12 - p_1) \right) dt + D dW
     = D (X_1 - X_0) dt + D dW
 \end{align*}
 for linkage disequilibrium $D := (X_{h0}X_{\ell1} - X_{h1}X_{\ell0})$.
 Furthermore, we can use It\^o's lemma to get
 \begin{align*}
 d(X_{h0}X_{\ell1}) & = X_{\ell1} X_{h0} (X_1 - X_0)^2 dt  - X_{h0} X_1 X_{\ell1} X_0 dt + X_{h0}X_{\ell1}(X_1 - X_0)dW
 \end{align*}
 and
 \begin{align*}
 dD & = d(X_{h0}X_{\ell1} - X_{h1}X_{\ell0}) \\
 & = (X_{h0}X_{\ell1} - X_{h1}X_{\ell0})(X_1 - X_0)^2 dt  -  (X_{h0}X_{\ell1} - X_{h1}X_{\ell0}) X_1 X_0 dt \\
 & \qquad + (X_{h0}X_{\ell1} - X_{h1}X_{\ell0})(X_1 - X_0)dW \\
 & = D(q_1 - p_1)^2 dt  -  D p_1q_1 dt + D(p_1 - q_1)dW
 \end{align*}
  The special case presented here is thus a haploid version of the symmetric semi-dominant model in Gillespie's work.
\end{remark}

\begin{proof}[Proof of Theorem~\ref{T2}]
  Since $X^N$ weakly solves \eqref{eq:sSDEs} if and only if it solves
  the martingale problem from Definition~\ref{def:mp}, we need to show
  that a solution of the limiting martingale problem from
  Theorem~\ref{T1} solves \eqref{eq:limit}. By the martingale
  representation Theorem (see e.g.\ Theorem~16.12. of
  \citealp{Kallenberg2002}), it is enough to show that (with
  $X = (X_{\ell0}, X_{\ell1}, X_{h0}, X_{h1})$ a solution of the
  limiting martingale problem) $X$ is a semimartingale with
  $X=X_0 + M+A$, where $A = (A_{\ell0}, A_{\ell1}, A_{h0}, A_{h1})$ is
  a process of finite variation with
  \begin{equation}
    \label{eq:toshow1}
    \begin{aligned}
      A_{a 0}(t)
      & = \int_0^t
      \theta_a (rX_{a,1}(s) - (1-r)X_{a0}(s)) + \frac{\sigma^2}{\gamma}
      X_{a0}(s)X_{1}(s)(X_{1}(s) - X_0(s))ds,
      \\
      A_{a 1}(t)
      & = \int_0^t
      \theta_a ((1-r)X_{a0}(s) - r X_{a1}(s)) + \frac{\sigma^2}{\gamma}
      X_{a1}(s)X_{0}(s)(X_{0}(s) - X_1(s))ds,
    \end{aligned}
  \end{equation}
  and  $M = (M_{\ell0}, M_{\ell1}, M_{h0}, M_{h1})$ is a
  martingale with
  covariation
  \begin{align}
    \label{eq:toshow2} [M_{ai}, M_{bj}](t)
    & = \int_0^t \Big((\delta_{ai, bj} -X_{ai}(s))X_{bj}(s) +
      (-1)^{i+j}\frac{2\sigma^2}{\gamma} X_{ai}(s) X_{bj}(s)X_{1-i}(s)X_{1-j}(s)\Big) ds.
  \end{align}
  As a general fact (see e.g.\ Corollary 4.6 in
  \citealp{depperschmidt2012}),
  \begin{align}\label{eq:genFact1}
    A_{ai}(t) & = \int_0^t G\Phi_{ai}(X(s))ds,\\ \label{eq:genFact2}
    [M_{ai}, M_{bj}](t) & = \int_0^t G\Phi_{ai}\Phi_{bj}(X(s)) - \Phi_{ai}(X(s)) G\Phi_{bj}(X(s)) -
                          \Phi_{bj}(X(s))G\Phi_{ai}(X(s))ds.
  \end{align}
  While the first term in \eqref{eq:toshow1} is due to
  $G^{\text{mut}}$, the first term in \eqref{eq:toshow2} is due to
  $G^{\text{res}}$. For the remaining terms, we need to evaluate the
  operator $\bar G^{\text{sel}}$.  First, for $v \in \{0,1\}^n$ and
  $Z\sim \nu$,
  \begin{align*}
    \chi_{kl}(v) = \mathbb E_\nu[Z(v_k) Z(v_\ell)]
    = \tfrac 14 \big(1_{v_k = v_l} - 1_{v_k \neq v_l}\big)
    = \tfrac 12 1_{v_k = v_l} - \tfrac 14.
  \end{align*}
  Plugging this into \eqref{eq:Gfsel}, we obtain
  \begin{align*}
    \overline G^{\text{sel}} \Phi_{ai}(x)
    & = \frac{\sigma^2}{\gamma} \langle x^3,
      1_{\{\vartheta_a\} \times \{i\}}(u_1,v_1) (1_{v_2 = v_3} - 1_{v_1 = v_2})\rangle,\\
    & =
      \frac{\sigma^2}{\gamma} \big(x_{ai}(1 - 2x_0 x_1) - x_{ai}x_i\big)
      = \frac{\sigma^2}{\gamma} x_{ai}(x_{1-i} - 2x_ix_{1-i})) = \frac{\sigma^2}{\gamma}x_{ai}x_{1-i}(x_{1-i} - x_i) ,
      \intertext{which shows \eqref{eq:toshow1} due to \eqref{eq:genFact1} and}
      \overline G^{\text{sel}} \Phi_{ai} \Phi_{bj}(x)
    & -
      \Phi_{ai}(x) \overline G^{\text{sel}} \Phi_{bj}(x)
      - \Phi_{bj}(x) \overline G^{\text{sel}} \Phi_{ai}(x)
    \\ & = \frac{\sigma^2}{\gamma} \Big(\langle x^2,
         1_{\{\vartheta_a\} \times \{i\}}(u_1,v_1)
         1_{\{\vartheta_b\} \times \{j\}}(u_2,v_2) 1_{v_1 = v_2}\rangle - x_{ai}x_{bj} (1 - 2x_0x_1)
    \\ & \qquad \qquad \qquad \qquad \qquad \qquad \qquad + x_{ai}{x_{bj}}(x_{1-i}(x_{1-i} - x_i) + x_{1-j}(x_{1-j} - x_j))\Big)
    \\ & = \frac{\sigma^2}{\gamma} x_{ai}x_{bj} (\delta_{ij}-1 + x_{1-i}^2 + x_{1-j}^2)
         \intertext{Now, for $i = j$, this gives}
    & = \frac{2\sigma^2}{\gamma} x_{ai}x_{bi}x_{1-i}^2,
      \intertext{whereas for $i \neq j$, we have}
    & = \frac{\sigma^2}{\gamma} x_{ai}x_{bj}(-1 + x_0^2 + x_1^2) = - \frac{2\sigma^2}{\gamma} x_{ai}x_{bj}x_0x_1,
      \intertext{which gives in total}
       & = (-1)^{i+j}\frac{2\sigma^2}{\gamma} x_{ai}x_{bj}x_{1-i}x_{1-j},
  \end{align*}
  which finally gives \eqref{eq:toshow2} due to \eqref{eq:genFact2}
  and the proof is complete.
\end{proof}

~

\noindent
Recall $X_h = X_{h0} + X_{h1}$ and $X_\ell = 1-X_h$. We now give a
result on the fixation probability of $X_h$.

\begin{theorem}[Fixation probability\label{T3}]
  Let $X$ be the solution of \eqref{eq:limit} with initial condition
  \begin{align*}
    X_h(0) & = x, \qquad & X_{h0}(0) &= px, \qquad & X_{\ell0}(0) &= q(1-x),\\
    X_{\ell}(0) &= (1-x),  \qquad & X_{h1}(0) &= (1-p)x, \qquad & X_{\ell1}(0) &= (1-q)(1-x).
  \end{align*}
  Let $r\in[0,1]$ be the probability that a mutation event results in type 0.
  Then,
  \begin{align}
    \label{eq:T3}&\frac{\gamma}{\sigma^2}(\mathbb P  (X_h(\infty)=1) - x)
    \\  \notag & \quad \xrightarrow{\sigma^2/\gamma \to 0}
                 x(1-x) \Big[\frac{(2r-1)(q-r)(1 + 2\vartheta_\ell)}{(1+\vartheta_\ell)(3 + 2\vartheta_\ell)}
                 - \frac{(2r-1)(p-r)(1+2\vartheta_h)}{(1+\vartheta_h)(3+2\vartheta_h)}
    \\ \notag & \quad + 2\Big(
                (1-x) \frac{(q-r)^2}{3 + 2\vartheta_\ell}
                - x\frac{(p-r)^2}{3 + 2\vartheta_h}+ (2x-1)\frac{(p-r)(q-r)}{3 + \vartheta_\ell + \vartheta_h}+ r(1-r)
                \Big(\frac{1}{3 + 2\vartheta_\ell} - \frac{1}{3 + 2\vartheta_h}\Big)
                \Big)\Big].
  \end{align}
\end{theorem}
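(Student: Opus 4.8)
The plan is to reduce the fixation probability to an occupation-time integral and then evaluate that integral under the neutral ($\sigma=0$) dynamics. Write $\varepsilon:=\sigma^2/\gamma$ and $D:=X_{h0}X_{\ell1}-X_{h1}X_{\ell0}$. Since $D$ vanishes on $\{X_h\in\{0,1\}\}$ and resampling keeps $X_h$ a diffusion absorbed at $0$ and $1$, we have $X_h(\infty)\in\{0,1\}$, so $\mathbb P(X_h(\infty)=1)=\mathbb E[X_h(\infty)]$. By the semimartingale decomposition \eqref{eq:limitCor}, $X_h(t)-x-\varepsilon\int_0^t D(X_1-X_0)\,ds$ is a martingale; taking expectations and letting $t\to\infty$ (the integrand is bounded and supported on $\{0<X_h<1\}$) gives, for every $\varepsilon>0$, the exact identity
\begin{align*}
  \frac{\gamma}{\sigma^2}\big(\mathbb P(X_h(\infty)=1)-x\big)=\mathbb E^{\varepsilon}\Big[\int_0^\infty D(X_1-X_0)(s)\,ds\Big],
\end{align*}
where $\mathbb E^\varepsilon$ denotes expectation under the $\varepsilon$-dynamics \eqref{eq:limit}.

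Next I would pass to the limit $\varepsilon\to 0$. Since the drift and the additional Brownian term in \eqref{eq:limit} both carry a factor of order $\sigma^2/\gamma$, the solutions $X^\varepsilon$ converge weakly on path space to the neutral process $X^0$ driven by $G^{\text{res}}+G^{\text{mut}}$. Combined with a uniform (in small $\varepsilon$) control of the tail of the occupation integral — available because $|D(X_1-X_0)|\le X_h(1-X_h)$ and the absorption of $X_h$ is not slowed by an $O(\varepsilon)$ drift — this yields
\begin{align*}
  \frac{\gamma}{\sigma^2}\big(\mathbb P(X_h(\infty)=1)-x\big)\xrightarrow{\sigma^2/\gamma\to 0}\mathbb E^{0}\Big[\int_0^\infty D(X_1-X_0)(s)\,ds\Big]=:\Psi(x,p,q).
\end{align*}

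It remains to compute the neutral occupation integral. I would change to the coordinates $x_h=X_h$, $p=X_{h0}/X_h$, $q=X_{\ell0}/X_\ell$ that already parametrize the initial condition, so that $D=x_h(1-x_h)(p-q)$ and $X_1-X_0=1-2\big(px_h+q(1-x_h)\big)$. An It\^o computation shows that in these coordinates the driving Brownian motions decorrelate, $d[x_h,p]=d[x_h,q]=d[p,q]=0$, and the neutral generator becomes
\begin{align*}
  G_0=\tfrac12 x_h(1-x_h)\partial_{x_h}^2+\tfrac{p(1-p)}{2x_h}\partial_p^2+\vartheta_h(r-p)\partial_p+\tfrac{q(1-q)}{2(1-x_h)}\partial_q^2+\vartheta_\ell(r-q)\partial_q .
\end{align*}
By Dynkin's formula and optional stopping at the absorption time, $\Psi$ is the solution of the Poisson equation $G_0\Psi=-D(X_1-X_0)$ with $\Psi=0$ on $\{x_h\in\{0,1\}\}$. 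Writing $\tilde p:=p-r$, $\tilde q:=q-r$, the right-hand side is a polynomial whose terms are linear and quadratic in $(\tilde p,\tilde q)$ with $x_h$-polynomial coefficients, so I use the matching ansatz $\Psi=A(x_h)+B(x_h)\tilde p+C(x_h)\tilde q+D_2(x_h)\tilde p^2+E(x_h)\tilde q^2+F(x_h)\tilde p\tilde q$. The key structural point is that $G_0$ is triangular in the $(\tilde p,\tilde q)$-degree: the mutation drifts $-\vartheta_h\tilde p\,\partial_{\tilde p}$ and $-\vartheta_\ell\tilde q\,\partial_{\tilde q}$ preserve degree, while the resampling pieces lower it and, through $p(1-p)=r(1-r)+(1-2r)\tilde p-\tilde p^2$, feed the equilibrium-variance constant $r(1-r)$ into the lower levels. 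Because $\tfrac12 x_h(1-x_h)\partial_{x_h}^2$ has $x_h(1-x_h)$ and $x_h(1-x_h)(1-2x_h)$ as eigenfunctions (eigenvalues $-1$ and $-3$) and the full operator maps each low-degree $x_h$-block into itself, every coefficient equation is solved within such a block; e.g.\ the quadratic level gives $D_2=-\tfrac{2}{3+2\vartheta_h}x_h^2(1-x_h)$ and $F=-\tfrac{2}{3+\vartheta_\ell+\vartheta_h}x_h(1-x_h)(1-2x_h)$, and after routing the induced constant and linear terms downward one obtains $B=-\tfrac{(2r-1)(1+2\vartheta_h)}{(1+\vartheta_h)(3+2\vartheta_h)}x_h(1-x_h)$ and $A=2r(1-r)\big(\tfrac{1}{3+2\vartheta_\ell}-\tfrac{1}{3+2\vartheta_h}\big)x_h(1-x_h)$. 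Evaluating $\Psi$ at $(x,p,q)$ then reproduces \eqref{eq:T3}.

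The routine but delicate part — and the main obstacle — is the bookkeeping in this last step: one must track how the second-order (resampling) terms in $p$ and $q$ lower the $(\tilde p,\tilde q)$-degree and correctly route the induced $r(1-r)$ and $(1-2r)\tilde p$ contributions into the constant- and linear-level equations, which is precisely what turns the naive denominators $1+\vartheta_a$ into the combinations $(1+\vartheta_a)(3+2\vartheta_a)$ and produces the mixed denominator $3+\vartheta_\ell+\vartheta_h$. A secondary technical point is justifying the interchange of the limit $\sigma^2/\gamma\to 0$ with the infinite-time integral, i.e.\ uniform integrability of the occupation time across small $\varepsilon$.
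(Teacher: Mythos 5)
Your proposal is correct, and its core computation takes a genuinely different route from the paper's. The two proofs share the first two steps: the occupation-time identity obtained from \eqref{eq:limitCor}, namely $\mathbb P(X_h(\infty)=1)-x = \tfrac{\sigma^2}{\gamma}\,\mathbb E\big[\int_0^\infty (X_{h0}X_{\ell 1}-X_{h1}X_{\ell 0})(X_1-X_0)\,ds\big]$ (this is exactly \eqref{eq:T3int}), and the replacement of the $\sigma^2/\gamma$-dynamics by the neutral dynamics inside the integral as $\sigma^2/\gamma\to 0$ (the paper is no more rigorous than you here; it simply invokes continuity of the integral in $\sigma^2/\gamma$). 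The divergence is in evaluating the neutral integral. The paper expands the integrand into monomials in the $X_{ai}$ and computes each time integral by coalescent duality following Remark~\ref{rem:neutral}: lineages coalesce at rate $1$, mutations run along branches at rates $\vartheta_a$, and case-by-case bookkeeping of coalescence/mutation events yields \eqref{eq:1}--\eqref{eq:5}, which are then summed. You instead pass to the coordinates $(x_h,p,q)=(X_h,\,X_{h0}/X_h,\,X_{\ell 0}/X_\ell)$, verify they decorrelate, and solve the Poisson equation $G_0\Psi=-x_h(1-x_h)(p-q)\big(1-2(px_h+q(1-x_h))\big)$ with a polynomial ansatz in $\tilde p=p-r$, $\tilde q=q-r$. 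I checked this in detail: the cross-variations $[x_h,p]$, $[x_h,q]$, $[p,q]$ do vanish, the drift and quadratic variation of $p$ are $\vartheta_h(r-p)$ and $p(1-p)/x_h$ as you claim, and the coefficient equations at the $\tilde p^2$, $\tilde p\tilde q$, $\tilde p$ and constant levels give exactly your $D_2$, $F$, $B$, $A$ (with $E$, $C$ by the symmetry $x_h\leftrightarrow 1-x_h$, $\vartheta_h\leftrightarrow\vartheta_\ell$); evaluating $\Psi$ at the initial condition reproduces \eqref{eq:T3} term by term. As for what each approach buys: the coalescent route is probabilistically transparent and reusable for arbitrary neutral moments, at the cost of the lengthy event analysis; your route reduces everything to differentiating polynomials, and it explains structurally why the denominators $1+\vartheta_a$, $3+2\vartheta_a$, $3+\vartheta_\ell+\vartheta_h$ appear --- they are eigenvalues of the relevant one-dimensional operators acting on $x_h(1-x_h)$, $x_h^2(1-x_h)$ and $x_h(1-x_h)(1-2x_h)$.

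One point you should make explicit rather than leave implicit: the generator $G_0$ has coefficients $p(1-p)/(2x_h)$ and $q(1-q)/(2(1-x_h))$ that blow up at the absorbing boundary, and $p$, $q$ are ill-defined after absorption of $x_h$, so the Dynkin/optional-stopping step needs a word of justification. The clean fix is to note that your $\Psi$, rewritten in the original coordinates, is a polynomial --- e.g.\ $B\tilde p$ is a constant times $(1-X_h)(X_{h0}-rX_h)$ and $D_2\tilde p^2=-\tfrac{2}{3+2\vartheta_h}(1-X_h)(X_{h0}-rX_h)^2$ --- hence lies in the domain of the neutral generator, $G_0\Psi$ is bounded, and $|\Psi|\le C\,X_h(1-X_h)$ with $\mathbb E[X_h(t)(1-X_h(t))]=x(1-x)e^{-t}\to 0$, which gives both the martingale property and the vanishing of $\mathbb E[\Psi(X_t)]$ as $t\to\infty$. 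With that remark added, your argument is complete.
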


\noindent
Actually, a straight-forwards (but tedious) calculation leads to a
different form of the last formula.

\begin{corollary}[Different form of the fixation probability\label{cor1}]
  For the same situation as in Theorem~\ref{T3}, \eqref{eq:T3} can
  also be written as
  \begin{align}\label{eq:cor1}
    \frac{\gamma}{\sigma^2}&(\mathbb P  (X_h(\infty)=1) - x) \xrightarrow{\sigma^2/\gamma \to 0}
    \\  &x(1-x) \notag\\\notag
                           &\cdot\Bigg[ (p-q)\cdot
                             \left(\frac{(1-2r)(1+2 \vartheta_l)}{(3+2\vartheta_l)(1+\vartheta_l)}
                             + \frac{2(1-x)(r-q)}{(3+2\vartheta_l)} + \frac{2x(r-p)}{(3+2 \vartheta_h)}\right)\\
                           \notag&\hspace{0.4cm}- (1-2r)(r-p)(\vartheta_h - \vartheta_l)\\
                           \notag&\hspace{0.7cm}\cdot\left(
                             - \frac{2(7+ 2 \vartheta_l +2 \vartheta_h)}{(2+\vartheta_h)(3+2\vartheta_h )(2+\vartheta_l)(3+2\vartheta_l)}
                             + \frac{(2 - \vartheta_h\vartheta_l)}{(2+\vartheta_l)(1+\vartheta_l)(2+\vartheta_h)(1+\vartheta_h)}\right)\\
                           \notag&\hspace{0.4cm}+\frac{2(r - q)(r-p)(\vartheta_h-\vartheta_l)}{(3 + \vartheta_h + \vartheta_l)} \cdot \left( \frac{(1-x)}{(3 + 2\vartheta_l)} + \frac{x}{(3+2\vartheta_h)}\right)
                             +\frac{4r(1-r) (\vartheta_h - \vartheta_l)}{(3+2\vartheta_l)(3+2\vartheta_h )}
                             \Bigg].
  \end{align}
\end{corollary}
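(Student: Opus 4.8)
The statement is an identity between two explicit rational functions of $(p,q,r,x,\vartheta_\ell,\vartheta_h)$, so the plan is simply to verify that the bracket in \eqref{eq:T3} equals the bracket in \eqref{eq:cor1}, the common prefactor $x(1-x)$ being already shared. The cleanest bookkeeping uses the abbreviations $a:=p-r$, $b:=q-r$ (so that $p-q=a-b$, $r-p=-a$, $r-q=-b$) together with $D_\ell:=3+2\vartheta_\ell$, $D_h:=3+2\vartheta_h$, $D_m:=3+\vartheta_\ell+\vartheta_h$. The organizing principle, which \eqref{eq:cor1} is designed to expose, is to split the bracket into a part proportional to $p-q$ (which survives when $\vartheta_\ell=\vartheta_h$) and a part proportional to $\vartheta_h-\vartheta_\ell$ (which vanishes when the two mutation rates coincide).

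First I would check the diagonal case $\vartheta_\ell=\vartheta_h=:\vartheta$, in which the last three blocks of \eqref{eq:cor1} disappear. Writing $A:=(1+2\vartheta)/\big((1+\vartheta)(3+2\vartheta)\big)$, the linear-in-$(2r-1)$ line of \eqref{eq:T3} collapses to $(1-2r)A\,(p-q)$, matching the first summand of the first parenthesis of \eqref{eq:cor1}; and the quadratic block, after substituting $a,b$, satisfies the elementary identity
\begin{align*}
  (1-x)b^2 - xa^2 + (2x-1)ab = -(a-b)\big[(1-x)b + xa\big],
\end{align*}
which is exactly the remaining summands $2(1-x)(r-q)/D_\ell + 2x(r-p)/D_h$ of that parenthesis (times $p-q$). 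This establishes the identity on the diagonal and fixes the first block.

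Next I would account for the $\vartheta_h-\vartheta_\ell$ corrections one by one. The $r(1-r)$ term of \eqref{eq:T3} passes directly to the fourth block via $1/D_\ell-1/D_h = 2(\vartheta_h-\vartheta_\ell)/(D_\ell D_h)$. For the quadratic cross contribution I would subtract the $(p-q)$-part already accounted for and collect the remainder, which equals $2ab\big[(2x-1)/D_m+(1-x)/D_\ell-x/D_h\big]$; the telescoping identities $1/D_\ell-1/D_m=(\vartheta_h-\vartheta_\ell)/(D_\ell D_m)$ and $1/D_m-1/D_h=(\vartheta_h-\vartheta_\ell)/(D_m D_h)$ then turn this into the third block. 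Finally, splitting $(q-r)=-(p-q)+(p-r)$ in the linear line produces, besides the already-matched $(p-q)$-term, the contribution $(1-2r)(p-r)(A_h-A_\ell)$ feeding the second block, where $A_a:=(1+2\vartheta_a)/\big((1+\vartheta_a)(3+2\vartheta_a)\big)$.

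The one genuinely delicate point -- and the main obstacle -- is the coefficient of the second block. The reorganization of \eqref{eq:T3} naturally yields $A_h-A_\ell=(\vartheta_h-\vartheta_\ell)\big(1-2(\vartheta_\ell+\vartheta_h)-4\vartheta_\ell\vartheta_h\big)/\big((1+\vartheta_\ell)(3+2\vartheta_\ell)(1+\vartheta_h)(3+2\vartheta_h)\big)$, whereas \eqref{eq:cor1} records this coefficient in an unsimplified form carrying the apparently spurious factors $(2+\vartheta_\ell)$ and $(2+\vartheta_h)$. Hence the last step is to verify that
\begin{align*}
  \frac{1-2(\vartheta_\ell+\vartheta_h)-4\vartheta_\ell\vartheta_h}{(1+\vartheta_\ell)(3+2\vartheta_\ell)(1+\vartheta_h)(3+2\vartheta_h)} = -\frac{2(7+2\vartheta_\ell+2\vartheta_h)}{(2+\vartheta_h)(3+2\vartheta_h)(2+\vartheta_\ell)(3+2\vartheta_\ell)} + \frac{2-\vartheta_h\vartheta_\ell}{(2+\vartheta_\ell)(1+\vartheta_\ell)(2+\vartheta_h)(1+\vartheta_h)},
\end{align*}
a two-variable rational identity: clearing the common denominator $(1+\vartheta_\ell)(1+\vartheta_h)(2+\vartheta_\ell)(2+\vartheta_h)(3+2\vartheta_\ell)(3+2\vartheta_h)$ reduces it to equality of two degree-four polynomials in $\vartheta_\ell,\vartheta_h$, which can be confirmed by direct expansion (or by computer algebra). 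With this cancellation of the $(2+\vartheta)$-factors in hand, all four blocks of \eqref{eq:cor1} are matched and the proof is complete.
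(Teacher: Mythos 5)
Your proposal is correct: the diagonal-case identity $(1-x)b^2-xa^2+(2x-1)ab=-(a-b)\bigl[(1-x)b+xa\bigr]$, the telescoping relations for $D_\ell, D_m, D_h$, and the final rational identity (which I checked by clearing the common denominator; both sides expand to $4-6\vartheta_\ell-6\vartheta_h-23\vartheta_\ell\vartheta_h-4\vartheta_\ell^2-4\vartheta_h^2-10\vartheta_\ell^2\vartheta_h-10\vartheta_\ell\vartheta_h^2-4\vartheta_\ell^2\vartheta_h^2$) all hold, so the two brackets agree. This is essentially the paper's own route -- the paper offers no details beyond calling the corollary a ``straight-forward (but tedious) calculation'' -- and your decomposition into a $(p-q)$-proportional block and $(\vartheta_h-\vartheta_\ell)$-proportional blocks is simply a well-organized execution of that calculation.
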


\begin{remark}[Checking the fixation probability]
  Some symmetries in \eqref{eq:T3} (or equivalently in
  \eqref{eq:cor1}) can directly be seen:
  \begin{itemize}
  \item The right hand side changes sign if we exchange
    $\vartheta_h \leftrightarrow \vartheta_\ell$, $p\leftrightarrow q$ and
    $x\leftrightarrow 1-x$, since the roles of $X_h$ and $X_\ell$ are
    simply exchanged.
  \item If $p=q=r=0$ or $p=q=r=1$, the right hand side is 0.
  \item If $\vartheta_h = \vartheta_\ell=0$, the result does not depend on
    $r$ since there are no mutations.
  \item If $\vartheta_h = \vartheta_\ell$ and $p=q$, the right hand side
    is~0 since $X_h$ and $X_\ell$ are the same (in distribution).
  \end{itemize}
  Another interesting case is $p=q=r$, which means that both $X_h$ and
  $X_\ell$ are in their mutational balance already at time~0. In this
  case, we find that
  $$ \mathbb P  (X_h(\infty)=1) \approx x
  + 4 x(1-x) \frac{\sigma^2}{\gamma} \frac{r(1-r) (\vartheta_h -
    \vartheta_l)}{(3+2\vartheta_l)(3+2\vartheta_h )}$$ for small
  $\sigma^2 / \gamma$. This means that the fixation probability of
  $X_h$ is greater than under neutrality (i.e.\ for $\sigma^2=0$) iff
  $\vartheta_h > \vartheta_\ell$.
\end{remark}

\begin{remark}[Computing moments under neutrality]\label{rem:neutral}
  In the proof of Theorem~\ref{T3}, we will have to compute moments of
  $X$ under neutral evolution, i.e.\ $\sigma^2/\gamma=0$ in
  \eqref{eq:limit}. Since the evolution of $X$ is only driven by
  mutation and resampling then, such moments can be computed using the
  coalescent \citep{Durrett2008}, which is dual to the solution of
  \eqref{eq:limit}. Assume we aim to compute an $n$-th moment of
  $X(t)$ i.e.\ $\mathbb E[X_{a_1 i_1}(t) \cdots X_{a_n i_n}(t)]$ for
  some $a_1,...,a_n \in \{\ell,h\}$ and $i_1,...,i_n \in
  \{0,1\}$. Then, the coalescent starts with $n$ lineages, any
  (unordered) pair of lineages coalesces independently at rate~1, and
  the resulting lineages, stopped after having evolved for time $t$,
  are assigned some type, randomly chosen from $X(0)$. Mutations are
  modeled on top of this tree structure, and we have to deal with all
  cases such that lineage $k$ is assigned type $a_ki_k,
  k=1,...,n$. Since there is no mutation transforming $\ell$ to $h$
  and back, lineages assigned with $\ell$ must not coalesce with
  lineages with $h$, and ancestors of $\ell$ ($h$) must be of type
  $\ell$ ($h$). On all such events, mutation from $0$ to $1$ and back
  (at rates $\vartheta_h$ and $\vartheta_\ell$, depending on the type
  at the first locus) determines types at the second locus. These
  arguments will be used below starting in \eqref{eq:311}.
\end{remark}

\begin{proof}[Proof of Theorem~\ref{T3}]
  We will use the equality (recall \eqref{eq:limitCor})
  \begin{align}\label{eq:T3int}
    \mathbb P_x(X_h(\infty)=1)
    & =
      \mathbb E_x[X_h(\infty)] = x + \int_0^\infty \mathbb E[GX_h(t)] dt
    \\ & \notag = x + \frac{\sigma^2}{\gamma}
         \int_0^\infty \mathbb E[(X_{h0}(t)X_{\ell1}(t) - X_{h1}(t)X_{\ell0}(t))(X_1(t) - X_0(t))] dt,
         \intertext{together with}
         \notag (X_{h0}X_{\ell1} - X_{h1}X_{\ell0}) & (X_1 - X_0)
                                                      =
                                                      (X_{h0}X_{\ell1} + X_{h0}X_{\ell0} - X_{h1}X_{\ell0} - X_{h0}X_{\ell0})(1 - 2X_0)
    \\ \notag
    & = (X_\ell X_{h0} - X_hX_{\ell0}) + 2((X_hX_{h0}X_{\ell0} - X_\ell X_{h0}X_{\ell0}) + (X_hX_{\ell0}^2 - X_\ell X_{h0}^2)).
  \end{align}
  Since we are studying the case of low $\sigma^2/\gamma$, and the
  integral in \eqref{eq:T3int} is continuous in $\sigma^2/\gamma$, we
  only need to evaluate the integral at $\sigma^2/\gamma=0$. From
  \eqref{eq:limit}, we see that we need to study neutral evolution
  with the same mutation mechanism. We will write $\mathbb P(.)$ for
  the corresponding probability measure and $\mathbb E[.]$ for the
  expectation under neutral evolution. Following
  Remark~\ref{rem:neutral}, we start with
  \begin{align}
    \notag \mathbb E[X_{h}(t)] & = X_{h}(0), \qquad \mathbb E[X_{\ell}(t)] = X_{\ell}(0)\\
    \label{eq:311}
    \mathbb E[X_{h0}(t)] & = e^{-\vartheta_h t} X_{h0}(0) + (1-e^{-\vartheta_h t}) rX_h(0) = x(r + e^{-\vartheta_h t}(p-r)),
    \\
    \notag \mathbb E[X_{\ell 0}(t)]
                               & = (1-x)(r + e^{-\vartheta_\ell t}(q-r)),
  \end{align}
  since either no mutation at the $B$-locus happened by time $t$ and
  the ancestor at time $0$ had type 0, or a mutation occurred which
  resulted in a type~0 at the $B$-locus. Then, for
  $\mathbb E[X_{\ell}(t)X_{h0}(t)]$, note that coalescence of the two
  corresponding lines must not have occurred by time $t$ since
  mutation cannot transform $\ell$ to $h$ or back. The same argument
  applies to $\mathbb E[X_{h}(t)X_{\ell0}(t)]$, hence,
  \begin{align}
    \notag \int_0^\infty & \mathbb E[X_\ell(t) X_{h0}(t) - X_h(t) X_{\ell0}(t)] dt
    \\ \notag & = \int_0^\infty e^{-t} ((1-x)x (r + e^{-\vartheta_h t}(p-r)) - x(1-x) (r + e^{-\vartheta_\ell t}(q - r))dt
    \\ \label{eq:1} & = x(1-x) \Big(\frac{p-r}{1+\vartheta_h} - \frac{q-r}{1+\vartheta_\ell}\Big).
                      \intertext{For $\mathbb E[X_h X_{h0} X_{\ell0} - X_\ell X_{h0} X_{\ell0}]$, coalescence may
                      occur between the two $h$-lines in the first and the two $\ell$-lines in the second term. However,
                      on the event that such a coalescence occurs, $\mathbb E[X_h X_{h0} X_{\ell0},
                      \text{coal}] = \mathbb E[X_{h0} X_{\ell0}, \text{coal}] = \mathbb E[X_\ell X_{h0} X_{\ell0},
                      \text{coal}]$, i.e.\ this case cancels. Hence,}
                      \notag \int_0^\infty & \mathbb E[X_h(t) X_{h0}(t) X_{\ell0}(t) - X_\ell(t) X_{h0}(t) X_{\ell0}(t)] dt
    \\ \notag& = \int_0^\infty e^{-3t} x(1-x)(2x-1) (r + e^{-\vartheta_h t}(p-r))(r + e^{-\vartheta_\ell t}(q-r))dt
    \\ \label{eq:2} & = x(1-x)(2x-1) \Big( \frac{r^2}{3} + \frac{r(p-r)}{3+\vartheta_h} + \frac{r(q-r)}{3+\vartheta_\ell} + \frac{(p-r)(q-r)}{3 + \vartheta_h + \vartheta_\ell}\Big).
                      \intertext{For $\mathbb E[X_h(t) X_{\ell0}(t)^2 - X_\ell(t) X_{h0}(t)^2]$, either no
                      coalescence occurs, or colescence occurs
                      between the two $\ell$-lines ($h$-lines) in the first (second) term.
                      In this case, either no mutation occurs on both branches to the most
                      recent common ancestor, and this has type $\ell0$ ($h0$), or mutation
                      occurs on exactly on one branch, or on both branches. So,}
                      \notag\int_0^\infty & \mathbb E[X_h(t) X_{\ell0}(t)^2 - X_\ell(t) X_{h0}(t)^2] dt
    \\ & \notag = \int_0^\infty e^{-3t} x(1-x)\Big( (1-x) (r + e^{-\vartheta_\ell t}(q-r))^2 - x (r + e^{-\vartheta_h t}(p-r))^2\Big)dt
    \\ & \notag+ \int_0^\infty \int_0^t  e^{-3s} e^{-(t-s)}
    \\ \notag& \qquad \cdot
               \Big(x\Big(e^{-2\vartheta_\ell s}\mathbb E[X_{\ell0}(t-s)] +
               \underbrace{2e^{-\vartheta_\ell s}(1-e^{-\vartheta_\ell s})r\mathbb E[X_{\ell0}(t-s)]
               + (1-e^{-\vartheta_\ell s})^2 r^2(1-x)}_{= (1-e^{-\vartheta_\ell s})r(2X_{\ell0}(t) - (1-e^{-\vartheta_\ell s}) r(1-x))}\Big)
    \\ \notag & \qquad \qquad - (1-x)\Big(e^{-2\vartheta_h s}\mathbb E[X_{h0}(t-s)] +
                2e^{-\vartheta_h s}(1-e^{-\vartheta_h s})r \mathbb E[X_{h0}(t-s)]
    \\ & \notag \qquad \qquad \qquad \qquad \qquad \qquad \qquad \qquad \qquad \qquad
         \qquad \qquad \qquad + (1-e^{-\vartheta_h s})^2r^2x\Big)\Big) dsdt
    \\ \label{eq:3} & = x(1-x)\Big( (1-2x)\frac{r^2}{3} + (1-x) \Big(\frac{2r(q-r)}{3+\vartheta_\ell} +
                      \frac{(q-r)^2}{3 + 2\vartheta_\ell}\Big) - x \Big(\frac{2r(p-r)}{3+\vartheta_h}
                      + \frac{(p-r)^2}{3 + 2\vartheta_h}\Big)\Big)
    \\ \label{eq:4a} & \qquad + x(1-x) \Big[ \int_0^\infty
                       \int_s^\infty e^{-3s}e^{-(t-s)} \Big(e^{-2\vartheta_\ell s}(r + e^{-\vartheta_\ell(t-s)}(q-r))
    \\ \notag & \qquad \qquad \qquad \qquad \qquad \qquad \qquad \qquad \qquad - e^{-2\vartheta_h s}
                (r + e^{-\vartheta_h(t-s)}(p-r)) dtds
    \\ \label{eq:4b} & \qquad + r\int_0^\infty \int_s^\infty e^{-3s}e^{-(t-s)} \Big(2(1 - e^{-\vartheta_\ell s})(r + e^{-\vartheta_\ell t}(q-r)) - (1 - e^{-\vartheta_\ell s})^2r
    \\ \notag & \qquad \qquad \qquad \qquad \qquad \qquad \qquad
                - 2(1 - e^{-\vartheta_h s})(r + e^{-\vartheta_h t}(p-r)) + (1-e^{-\vartheta_h s})^2r\Big) dtds    \Big].
  \end{align}
  Now, for \eqref{eq:4a}
  \begin{align}
    \notag \int_0^\infty &
                           \int_s^\infty e^{-3s}e^{-(t-s)} \Big(e^{-2\vartheta_\ell s}(r + e^{-\vartheta_\ell(t-s)}(q-r))
                           - e^{-2\vartheta_h s}(r + e^{-\vartheta_h(t-s)}(p-r)) dtds
    \\ \notag & =     \int_0^\infty
                \int_0^\infty e^{-3s}e^{-t} \Big(e^{-2\vartheta_\ell s}(r + e^{-\vartheta_\ell t}(q-r))
                - e^{-2\vartheta_h s}(r + e^{-\vartheta_h t}(p-r)) dtds
    \\ \notag & = \frac{1}{3 + 2\vartheta_\ell} \Big( r + \frac{q-r}{1 + \vartheta_\ell}\Big)
                -\frac{1}{3 + 2\vartheta_h} \Big( r + \frac{p-r}{1 + \vartheta_h}\Big)
    \\ \label{eq:4} & = \frac{r\vartheta_\ell + q}{(3 + 2\vartheta_\ell)(1+\vartheta_\ell)}
                      - \frac{r\vartheta_h + p}{(3 + 2\vartheta_h)(1+\vartheta_h)}
  \end{align}
  and for \eqref{eq:4b},
  \begin{align}
    \notag \int_0^\infty & \int_s^\infty e^{-3s}e^{-(t-s)} \Big(2(1 - e^{-\vartheta_\ell s})(r + e^{-\vartheta_\ell t}(q-r))
                           - (1 - e^{-\vartheta_\ell s})^2r
    \\ \notag & \qquad \qquad \qquad \qquad \qquad \qquad \qquad
                - 2(1 - e^{-\vartheta_h s})(r + e^{-\vartheta_h t}(p-r)) + (1-e^{-\vartheta_h s})^2r)\Big) dtds
    \\ \notag & = \int_0^\infty \int_s^\infty e^{-2s}e^{-t} \Big(2(1 - e^{-\vartheta_\ell s})e^{-\vartheta_\ell t}(q-r)
                - e^{-2\vartheta_\ell s} r
    \\ \notag & \qquad \qquad \qquad \qquad \qquad \qquad \qquad
                - 2(1 - e^{-\vartheta_h s})e^{-\vartheta_h t}(p-r) + e^{-2\vartheta_h s} r)\Big) dtds
    \\ \notag & = \int_0^\infty e^{-3s}  \Big(2(1 - e^{-\vartheta_\ell s}) \frac{1}{1+\vartheta_\ell} e^{-\vartheta_\ell s}(q-r)
                - e^{-2\vartheta_\ell s} r
    \\ \notag & \qquad \qquad \qquad \qquad \qquad \qquad \qquad
                - 2(1 - e^{-\vartheta_h s})e^{-\vartheta_h s} \frac{1}{1 + \vartheta_h}(p-r) + e^{-2\vartheta_h s} r)\Big) dtds
    \\ \notag & = \frac{2(q-r)}{1+\vartheta_\ell} \Big(\frac{1}{3+\vartheta_\ell} - \frac{1}{3 + 2\vartheta_\ell} \Big) -
                \frac{2(p-r)}{1+\vartheta_h} \Big(\frac{1}{3+\vartheta_h} - \frac{1}{3 + 2\vartheta_h} \Big)
                + \frac{r}{3 + 2\vartheta_h} - \frac{r}{3 + 2\vartheta_\ell}
    \\ \label{eq:5}& = \frac{2\vartheta_\ell(q-r)}{(1+\vartheta_\ell)(3 + \vartheta_\ell)(3 + 2\vartheta_\ell)}
                     - \frac{2\vartheta_h(p-r)}{(1+\vartheta_h)(3 + \vartheta_h)(3 + 2\vartheta_h)}
                     + \frac{r}{3 + 2\vartheta_h} - \frac{r}{3 + 2\vartheta_\ell}.
  \end{align}
  Summing
  $\text{\eqref{eq:1}} + 2\cdot \text{\eqref{eq:2}} + 2\cdot
  \text{\eqref{eq:3}} + 2x(1-x) \cdot \text{\eqref{eq:4}} + 2x(1-x)r
  \cdot \text{\eqref{eq:5}}$ gives
  \begin{align*}
    \int_0^\infty & \mathbb E[(X_{h0}(t)X_{\ell1}(t) - X_{h1}(t)X_{\ell0}(t))(X_1(t) - X_0(t))] dt
    \\ & = x(1-x)\Big[\frac{p-r}{1+\vartheta_h} - \frac{q-r}{1+\vartheta_\ell}
    \\ & \qquad \qquad + 2\Big(\frac{r(q-r)}{3+\vartheta_\ell}
         + (1-x) \frac{(q-r)^2}{3 + 2\vartheta_\ell}\Big) - 2\Big(\frac{r(p-r)}{3+\vartheta_h}
         + x\frac{(p-r)^2}{3 + 2\vartheta_h}\Big) + (2x-1)\frac{2(p-r)(q-r)}{3 + \vartheta_\ell + \vartheta_h}
    \\ & \qquad \qquad \qquad \qquad + 2 \Big( \Big( \frac{r\vartheta_\ell + q}{(3 + 2\vartheta_\ell)(1+\vartheta_\ell)}
         + \frac{r(1-r)-r}{3 + 2\vartheta_\ell}\Big)
         - \Big(\frac{r\vartheta_h + p}{(3 + 2\vartheta_h)(1+\vartheta_h)} + \frac{r(1-r)-r}{3 + 2\vartheta_h}\Big)
    \\ & \qquad \qquad \qquad \qquad \qquad \qquad + \frac{4\vartheta_\ell
         r(q-r)}{(1+\vartheta_\ell)(3 + \vartheta_\ell)(3 + 2\vartheta_\ell)}
         - \frac{4\vartheta_h r(p-r)}{(1+\vartheta_h)(3 + \vartheta_h)(3 + 2\vartheta_h)}\Big)\Big]
    \\ & = x(1-x)\Big[\frac{p-r}{1+\vartheta_h} - \frac{q-r}{1+\vartheta_\ell} + 2\Big((1-x) \frac{(q-r)^2}{3 + 2\vartheta_\ell}
         - x\frac{(p-r)^2}{3 + 2\vartheta_h}+ (2x-1)\frac{(p-r)(q-r)}{3 + \vartheta_\ell + \vartheta_h}\Big)
    \\ & \qquad \qquad + \frac{2r(q-r)}{3+\vartheta_\ell}\Big(\underbrace{1 +  \frac{2\vartheta_\ell}{(1+\vartheta_\ell)(3 + 2\vartheta_\ell)}}_{ = \frac{(1 + 2\vartheta_\ell)(3 + \vartheta_\ell)}{(1 + \vartheta_\ell)(3 + 2\vartheta_\ell)}}
         \Big) - \frac{2r(p-r)}{3+\vartheta_h}\Big(1 +  \frac{2\vartheta_h}{(1+\vartheta_h)(3 + 2\vartheta_h)}
         \Big)
    \\ & \qquad \qquad \qquad \qquad + 2 \Big( \frac{q-r}{(3 + 2\vartheta_\ell)(1+\vartheta_\ell)}
         - \frac{p-r}{(3 + 2\vartheta_h)(1+\vartheta_h)} + r(1-r)\Big(\frac{1}{3 + 2\vartheta_\ell} - \frac{1}{3 + 2\vartheta_h}\Big)\Big)\Big]
    \\ & = x(1-x)\Big[\frac{(p-r)(1 + 2\vartheta_h)}{(1+\vartheta_h)(3 + 2\vartheta_h)}
         - \frac{(q-r)(1 + 2\vartheta_\ell)}{(1+\vartheta_\ell)(3 + 2\vartheta_\ell)}
    \\ & \qquad \qquad + 2\Big((1-x) \frac{(q-r)^2}{3 + 2\vartheta_\ell}
         - x\frac{(p-r)^2}{3 + 2\vartheta_h}+ (2x-1)\frac{(p-r)(q-r)}{3 + \vartheta_\ell + \vartheta_h}\Big)
    \\ & \qquad \qquad     \qquad \qquad
         + \frac{2r(q-r)(1 + 2\vartheta_\ell)}{(1+\vartheta_\ell)(3 + 2\vartheta_\ell)}
         - \frac{2r(p-r)(1+2\vartheta_h)}{(1+\vartheta_h)(3+2\vartheta_h)} + 2r(1-r) \Big(\frac{1}{3 + 2\vartheta_\ell} - \frac{1}{3 + 2\vartheta_h}\Big)\Big]
    \\ & = x(1-x) \Big[\frac{(2r-1)(q-r)(1 + 2\vartheta_\ell)}{(1+\vartheta_\ell)(3 + 2\vartheta_\ell)}
         - \frac{(2r-1)(p-r)(1+2\vartheta_h)}{(1+\vartheta_h)(3+2\vartheta_h)}
    \\ & \quad + 2\Big((1-x) \frac{(q-r)^2}{3 + 2\vartheta_\ell}
         - x\frac{(p-r)^2}{3 + 2\vartheta_h}+ (2x-1)\frac{(p-r)(q-r)}{3 + \vartheta_\ell + \vartheta_h} + r(1-r) \Big(\frac{1}{3 + 2\vartheta_\ell} - \frac{1}{3 + 2\vartheta_h}\Big)\Big)\Big]
  \end{align*}
  which together with \eqref{eq:T3int} shows \eqref{eq:T3}.
\end{proof}

\subsubsection*{Acknowledgments}
FB was supported by the DFG priority program SPP 2141 through grant
Ba-5529/1-1. PP was supported by the DFG priority program SPP 1590
through grant Pf-672/8-1.

\bibliographystyle{chicago}
\bibliography{secondorderevolution}

\end{document}